\numberwithin{equation}{section}
\newcommand{\R}{\mathbb{R}}
\newcommand{\C}{\mathbb{C}}
\newcommand{\Z}{\mathbb{Z}}
\renewcommand{\to}{\rightarrow}
\newcommand{\weakto}{\rightharpoonup}
\newcommand{\Ldif}{\mathcal{L}}
\renewcommand{\leq}{\leqslant}
\renewcommand{\geq}{\geqslant}
\newcommand{\eps}{\varepsilon}
\newcommand\1{{\ensuremath {\mathds 1} }}
\newtheorem{thm}{Theorem}[section]
\newtheorem{lemma}{Lemma}[section]
\newtheorem{defin}{Definition}[section]
\newtheorem*{defin*}{Definition}
\newtheorem{prop}{Proposition}[section]
\newtheorem{remark}{Remark}[section]
\numberwithin{equation}{section}
\begin{document}

\title[Continuum limit for discrete NLS with long-range lattice interactions]{On the continuum limit for discrete NLS with long-range lattice interactions}
\author{Kay Kirkpatrick}
\address{University of Illinois at Urbana-Champaign, Department of Mathematics, 1409 W. Green Street, Urbana, IL 61801}
%of Mathematical Sciences, New York University, and Centre De Recherche en MathŽmatiques de la D\'ecision, Universit\'e Paris IX Dauphine, Place du Mar\'echal de Lattre de TASSIGNY, F-75775 Paris Cedex 16 (France) }
\email{kkirkpat@illinois.edu}
\thanks{K.K. was partially supported by NSF grants DMS-0703618, DMS-1106770 and OISE-0730136}
\author{Enno Lenzmann}
\address{Mathematisches Institut, Universit\"at Basel, Rheinsprung 21, CH-4051 Basel, Switzerland.}
\email{enno.lenzmann@unibas.ch}
\thanks{E.L. acknowledges support by a Steno fellowship from the Danish Research Council}
\author{Gigliola Staffilani}
\address{Massachusetts Institute of Technology, Room 2-246, 77 Massachusetts Avenue, Cambridge, MA 02138}
\email{gigliola@math.mit.edu}
\thanks{G.S. was partially supported by NSF grant DMS-1068815 }

\maketitle
\begin{abstract}
We consider a general class of discrete nonlinear Schr\"odinger equations (DNLS) on the lattice $h \mathbb{Z}$ with mesh size $h >0$. In the continuum limit when $h \to 0$, we prove that the limiting dynamics are given by a nonlinear Schr\"odinger equation (NLS) on $\R$ with the fractional Laplacian $(-\Delta)^\alpha$ as dispersive symbol. In particular, we obtain that fractional powers $\frac 1 2 < \alpha < 1$ arise from long-range lattice interactions when passing to the continuum limit, whereas the NLS with the usual Laplacian $-\Delta$ describes the dispersion in the continuum limit for short-range or quick-decaying interactions (e.\,g., nearest-neighbor interactions).

Our results rigorously justify certain NLS model equations with fractional Laplacians proposed in the physics literature. Moreover, the arguments given in our paper can be also applied to discuss the continuum limit for other lattice systems with long-range interactions.
\end{abstract}

\section{Introduction}

%Partial differential equations (PDEs) with fractional Laplacians arise in a variety of physical models, e.\,g., in phase transitions, long-jump random walks, water waves, astrophysics, and quantum biophysics. See, e.\,g., \cite{SV,LV}.  

In the present paper, we show how PDEs with fractional Laplacians $(-\Delta)^\alpha$ can be rigorously derived as the continuum limit of certain discrete physical systems with long-range lattice interactions. In fact, this theme is of interest in the recent physics literature, where only formal arguments are presented; see, e.\,g., \cite{Las, Tar,TarZa,GMCR1,GMCR2,MCGJR}. In this work  our  rigorous arguments are for the derivation of these nonlocal continuum dynamics in the case of fractional NLS-type equations, for the sake of simplicity. But in fact, the arguments developed below will have applications to continuum limits for other types of discrete evolution equations with long-range interactions.   

As a specific physical example, we take a family of models for charge transport in biopolymers like the DNA; see, e.\,g., \cite{GMCR1, GMCR2, MCGJR}. Here, the starting point is a discrete nonlinear Schr\"odinger equation (DNLS) with general lattice interactions as follows. We consider a 1-d lattice $h \Z$ with mesh size $h > 0$, which  is assumed to be  less than some fixed small constant: $h < h_0 \leq 1$. Moreover, we denote $x_m = hm$ with $m \in \Z$ in the following, and we consider discrete wave functions $u_h : \R \times h \Z \to \C$ that satisfy the  discrete NLS-type equation of the form
\begin{equation}\label{discrete}
i \frac{d}{dt} u_h(t, x_m) = h \sum_{n \neq m} \frac{ u_h(t,x_m) - u_h(t,x_n) }{|x_m-x_n|^{1+2 s} }  \pm |u_h(t,x_m)|^2 u_h(t,x_m) .
\end{equation}
Here $0 < s < \infty$ is a fixed parameter controlling the decay behavior of the lattice interactions. In fact, we will formulate below a generalized version of  problem \eqref{discrete}, where we allow for more general interaction terms of the form $\beta(h)^{-1} J(|n-m|)$, where $J$ is defined below,  in place of the kernel $h(|x_m - x_n|)^{-(1+2s)}$.

Indeed, the discrete NLS equation \eqref{discrete} can be viewed  as  a family of models for quantum particles on a lattice with a three wave interaction set up which gives rise to the cubic nonlinearity, where the $+$ sign represents a repulsive on-site self-interaction and $-$ describes the focusing case. We consider the cubic interaction for simplicity, but what follows  can be easily generalized to different nonlinearities. In terms of DNA, the cubic nonlinearity models a self-interaction for a base pair of the strand with itself, and the summation term models interactions between base pairs decaying like an inverse power of the distance along the strand \cite{MCGJR}. The complex coiling of a DNA strand in three dimensions is what makes it plausible for base pairs to interact with others even a long distance away.

We are interested in the continuum limit, $h \to 0^+$, where we expect that $u_h$ tends (in a weak sense specified below) to a solution $u=u(t,x)$ of the fractional NLS of the form
\begin{equation}\label{continuum}
i \partial_t u = c(-\Delta)^{\alpha} u \pm |u|^2 u
\end{equation}
with $u : \R \times \R \to \C$, a constant $c$ depending only on $s$, and $\alpha$ depending on $s$ appropriately. Here, as usual, the fractional Laplacian $(-\Delta)^\alpha$ on $\R$ is defined via its multiplier $|k|^{2\alpha}$ in Fourier space. Our main results in Theorem \ref{thm:main} below show that the solution $u_h(t,x_m)$ of the discrete equation tends in the limit $h \to 0^+$ to $u = u(t,x)$ solving \eqref{continuum}, where the following holds.

\begin{itemize}
\item For $s$ below $1$ in \eqref{discrete}, the long-range interactions in the discrete NLS-type equation remain long-range in the continuum limit, producing a fractional NLS with a nonlocal character coming from the Laplacian of order $\alpha = s$. 
\item For $s$ above $1$ in \eqref{discrete}, the interaction strength decays quickly enough that only local effects survive in the continuum limit, which is exactly the ``classical"  NLS, $\alpha = 1$. 
\item
For $s=1$ in \eqref{discrete}, we get the classical NLS in the continuum limit, with a logarithmic factor appearing in the scaling constants, see e.g. \eqref{beta} below. 
%\item In fact, we discuss a broader class of discrete equations formulated in \eqref{eq:discivp} below with a general class of lattice interactions.
\end{itemize} 
This should be compared with numerical evidence in the physics literature that says there is a critical value $s_c$, numerically calculated to be near $1$, above which the behavior of the discrete long-range interactions is qualitatively like the (non-fractional) NLS \cite{GMCR1}.

\section{Formulation of the Main Result}

We start by introducing a broad class of discrete evolution equations, thereby generalizing problem \eqref{discrete}.  On the discrete one-dimensional lattice $h \Z$, we consider the evolution problem for the discrete wave function $u_h : [0,T) \times h \Z \to \C$ satisfying the initial value problem
\begin{equation} \label{eq:discivp}
\left \{ \begin{array}{l} \displaystyle i \frac{d}{dt} u_h(t, x_m) =  \frac{1}{\beta(h)} \sum_{n \neq m}  J_{|n-m|} \big [ u_h(t, x_m) - u_h(t, x_n) \big ] \pm |u_h(t,x_m)|^2 u_h(t,x_m)  \\
u_h(0,x_m) = v_h(x_m), \quad \mbox{$x_m = mh$ with $m \in \Z$}  . \end{array} \right . 
\end{equation}
Here we use the notation $J_n=J(|n|)$ to indicate the sequence, $v_h : h \Z \to \C$ is a given initial datum, and $\beta(h) > 0$ denotes the scaling factor depending on the lattice spacing constant $h> 0$. In fact, for a suitable choice of $\beta(h) > 0$ depending on the behavior of $J=(J_n)_{n=1}^\infty$ for large $n$, we will see below that the evolution problem \eqref{eq:discivp} exhibits a reasonable behavior in the {\em ``continuum limit''} as $h \to 0^+$. It turns out that it is natural to assume that $J = (J_n)_{n=1}^\infty$ belongs to the class $\mathcal{K}_s$ for some $0 < s \leq +\infty$, which we define as follows.

\begin{defin}[Interaction of class $\mathcal{K}_s$] Let $J = (J_n)_{n=1}^\infty$ be a sequence with $J_n \geq 0$ for all $n \geq 1$. We say that {\bf $J$ is an $s$-kernel}, or $J \in \mathcal{K}_s$ with $0 < s < \infty$ if 
$$
\mbox{$\displaystyle \lim_{n \to \infty} n^{1+2s} J_n = A$ for some finite $A > 0$.}
$$
Moreover, we say that {\bf $J$ is an $\infty$-kernel}, or $J \in \mathcal{K}_\infty$ if
$$
\mbox{$\displaystyle \lim_{n \to \infty} n^{1+2s} J_n = 0$ for all $s > 0$.}
$$
\end{defin}

\begin{remark} 
1.) The pure-power case $J_n = |n|^{-1-2s}$ clearly satisfies $(J_n)_{n=1}^\infty \in \mathcal{K}_s$. \\
2.) Any $(J_n)_{n=1}^\infty \in \mathcal{K}_\infty$ provided that $J_n \neq 0$ for only finitely many $n$. In particular, the case of {\em nearest-neighbor interactions} when $J_n = 1$ for $n = \pm 1$ and $J_n = 0$ else belongs to $\mathcal{K}_\infty$.  Note also that the class of exponentially decaying $J_n \sim e^{-c n} $ with some $c > 0$ belongs to $\mathcal{K}_\infty$.
\end{remark}

\medskip
Assuming that $J=(J_n)_{n=1}^\infty$ belongs to $\mathcal{K}_s$ for some $0 < s \leq +\infty$, it follows from standard arguments, see Proposition \ref{prop:gwp_discrete} below, that we have global well-posedness  for the initial-value problem \eqref{eq:discivp} in the space $L^2_h$ defined as
$$
L_h^2 = \{ v_h \in \C^{h \Z} : (v_h, v_h)_L^2 := h \sum_{m \in \Z} |v_h(x_m)|^2 < +\infty \}.
$$
In addition, it is straightforward to check that \eqref{eq:discivp} exhibits conservation of energy
$$
E(u_h) = \frac{1}{2 \beta(h)} \sum_{m,n : n \neq m} J_{|n-m|} \big | u_h(x_m) - u_h(x_n) \big |^2 \pm \frac{h}{4} \sum_{m} |u_h(x_m)|^4 ,
$$ 
and conservation of the (discrete) $L^2$-mass given by
$$
N(u_h) =h \sum_{m} |u_n(x_m)|^2 .
$$
Here, the overall factor of $h > 0$ appearing in $E(u_h)$ and $N(u_h)$ is a convenient convention when we discuss the continuum limit when $h \to 0^+$.

Associated to \eqref{eq:discivp}, we now turn to its tentative continuum problem for the wave function $u: [0,T) \times \R \to \C$. More specially, we consider NLS-type initial-value  problems of the form
\begin{equation} \label{eq:fNLS}
\left \{ \begin{array}{l} i \partial_t u = c (-\Delta)^{\alpha} u \pm |u|^2 u , \\
u(0,x) = v(x), \quad u : [0,T) \times \R \to \C. \end{array} \right .
\end{equation}
Here $c >0$ is some fixed constant determined below, and $(-\Delta)^{\alpha}$ denotes the (fractional) Laplacian on $\R$ given by its Fourier multiplier $|k|^{2 \alpha}$, where we assume that $0 < \alpha \leq 1$ holds in what follows. Note that $\alpha=1$ corresponds to the ``classical'' NLS, whereas the range $0 < \alpha < 1$ can be regarded as ``fractional'' NLS. In the focusing case when the minus sign stands in front of the nonlinearity in \eqref{eq:fNLS}, there exist ground state solitary wave solutions. For uniqueness (and further properties) of such ground states, we refer to \cite{FrLe}.  

Regarding the well-posedness for \eqref{eq:fNLS}, we record the following simple fact.

\begin{prop}\label{NLScont}
Suppose that $1/2 < \alpha \leq 1$, $c>0$, and let $v \in H^\alpha(\R)$ be an initial datum for \eqref{eq:fNLS}. Then  the initial-value problem \eqref{eq:fNLS} has  a global unique solution $u \in C^0([0,\infty); H^\alpha(\R))$. Moreover, we have conservation of energy and $L^2$-mass given by
\begin{equation}\label{energy}
\mathcal{E}(u) = \frac{c}{2} \int_\R \overline{u} (-\Delta)^{\alpha} u \pm \frac{1}{4} \int_\R  |u|^4, \quad \mathcal{N}(u) = \int_\R |u|^2. 
\end{equation}
Finally, we have the following global a-priori bound
\begin{equation}\label{mass}
\| u \|_{L^\infty_t H^{\alpha}_x} \leq C(u(0)).
\end{equation}
\end{prop}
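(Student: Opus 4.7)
The plan is to follow the standard recipe for local well-posedness plus a priori bounds, leveraging the fact that $1/2<\alpha\le 1$ puts us in a (sub)critical regime for the cubic nonlinearity.

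\textbf{Step 1: Local well-posedness in $H^\alpha$.} I would rewrite \eqref{eq:fNLS} in Duhamel form
\begin{equation*}
u(t)=S(t)v\mp i\int_0^t S(t-\tau)\,|u|^2u(\tau)\,d\tau,\qquad S(t):=e^{-ict(-\Delta)^\alpha},
\end{equation*}
and set up a contraction mapping in $X_T=C([0,T];H^\alpha(\R))$. Since $S(t)$ is a unitary group on $H^\alpha$, the linear part is harmless. The only input needed is that the map $u\mapsto |u|^2u$ is locally Lipschitz on $H^\alpha(\R)$. This is immediate from the fact that $H^\alpha(\R)$ is a Banach algebra when $\alpha>1/2$ (Sobolev embedding $H^\alpha\hookrightarrow L^\infty$ plus the standard Kato--Ponce / Moser estimate). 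A routine fixed-point argument then produces a unique solution on a time interval whose length depends only on $\|v\|_{H^\alpha}$. Iterating yields the blow-up alternative: either the solution exists globally, or $\|u(t)\|_{H^\alpha}\to\infty$ in finite time.

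\textbf{Step 2: Conservation laws.} Formally, pairing the equation with $\bar u$ (resp.\ $\partial_t\bar u$) and taking real parts yields $\tfrac{d}{dt}\mathcal{N}(u)=0$ and $\tfrac{d}{dt}\mathcal{E}(u)=0$. To make this rigorous I would approximate $v$ by Schwartz data $v_n$, whose solutions $u_n$ live in $C^1([0,T];H^k)$ for every $k$, so the pairings are justified classically; then pass to the limit using continuous dependence from Step 1 together with the embedding $H^\alpha\hookrightarrow L^4$ (valid for $\alpha\ge 1/4$ in one dimension) to control the $L^4$ term in the energy. This establishes \eqref{energy}.

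\textbf{Step 3: A priori $H^\alpha$ bound.} In the defocusing case the bound \eqref{mass} is immediate, since both terms in $\mathcal{E}(u)$ are non-negative and $\mathcal{E}$, $\mathcal{N}$ are conserved. In the focusing case the key input is the Gagliardo--Nirenberg inequality
\begin{equation*}
\|u\|_{L^4}^4\le C\,\|(-\Delta)^{\alpha/2}u\|_{L^2}^{1/\alpha}\,\|u\|_{L^2}^{4-1/\alpha},
\end{equation*}
whose exponent $1/\alpha$ is strictly less than $2$ precisely because $\alpha>1/2$. Substituting into the conserved energy and using mass conservation,
\begin{equation*}
\tfrac{c}{2}\|(-\Delta)^{\alpha/2}u\|_{L^2}^2\le \mathcal{E}(v)+\tfrac{C}{4}\,\|(-\Delta)^{\alpha/2}u\|_{L^2}^{1/\alpha}\,\|v\|_{L^2}^{4-1/\alpha},
\end{equation*}
and since $1/\alpha<2$, Young's inequality absorbs the last term into the left-hand side. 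This yields $\|u(t)\|_{H^\alpha}\le C(\|v\|_{H^\alpha})$ uniformly in $t$, giving \eqref{mass}. Combined with the blow-up alternative of Step~1 this promotes the local solution to a global one in $C^0([0,\infty);H^\alpha(\R))$.

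\textbf{Expected main obstacle.} Steps~1 and~2 are essentially bookkeeping, and the real content is the subcritical Gagliardo--Nirenberg estimate in Step~3: the whole argument breaks down in the focusing case at $\alpha=1/2$, where the exponent $1/\alpha$ hits $2$ and the absorption fails (this is the $L^2$-critical threshold, at which one would need a smallness-of-mass condition à la the ground state). The restriction $\alpha>1/2$ in the hypothesis is exactly what makes the Young-inequality absorption work, so I would take care to highlight how this condition enters both the algebra property used for local well-posedness and the subcriticality used for the global bound.
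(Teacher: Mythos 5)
Your proposal is correct and follows essentially the same route as the paper: Duhamel/fixed-point local well-posedness via the embedding $H^\alpha(\R)\subset L^\infty(\R)$ for $\alpha>\tfrac12$, conservation of $\mathcal{E}$ and $\mathcal{N}$, and the subcritical fractional Gagliardo--Nirenberg inequality (your exponents $1/\alpha$ and $4-1/\alpha$ agree with the paper's $\tfrac{1}{2\alpha}$ and $2-\tfrac{1}{2\alpha}$ after squaring) to obtain the global $H^\alpha$ bound. The extra details you supply on regularizing the conservation-law computation and on the Young-inequality absorption are sound and merely flesh out steps the paper leaves to the reader.
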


\begin{proof}
Thanks to the Sobolev embedding $H^\alpha(\R) \subset L^\infty(\R)$ when $\alpha > \frac 1 2$, this follows standard arguments of abstract evolution equations; see, e.\,g., \cite{C}. Indeed, by a simple fixed point argument, we deduce existence and uniqueness of $u \in C^0([0,T]; H^\alpha(\R))$ solving \eqref{eq:fNLS} for $T > 0$ sufficiently small, by using the integral equation
$$
u(t) = e^{-it(-\Delta)^{\alpha} } v - i \int_0^t e^{-i(t-s) (-\Delta)^{\alpha}} |u(s)|^2 u(s) \, ds.
$$
Note that the map $u \mapsto |u|^2 u$ is locally Lipschitz on $H^\alpha(\R) \subset L^\infty(\R)$ when $\alpha > \frac 1 2$. This shows local well-posedness for \eqref{eq:fNLS}. Moreover, it easy to check that $\mathcal{E}(u)$ and $\mathcal{N}(u)$ are conserved quantities. Finally, the global a-priori bound $\sup_{t \geq 0} \| u \|_{H^{\alpha}} \leq C(u(0))$ follows from conservation of $\mathcal{E}(u)$ and $\mathcal{N}(u)$ combined with the fractional Gagliardo--Nirenberg inequality
$$
\int_\R |  u |^4 \leq C \left ( \int_\R | (-\Delta)^\frac{\alpha}{2} u |^2 \right )^{\frac 1 {2 \alpha}} \left ( \int_\R | u |^2 \right )^{2-\frac 1 {2 \alpha}} .
$$
Using now the a-priori bound $\sup_{t \geq 0}\| u \|_{H^\alpha} \leq C$, we deduce that any local solution $u \in C^0([0,T]; H^\alpha(\R))$ extends to all times $t \geq 0$. \end{proof}

\begin{remark}
The above well-posedness result for \eqref{eq:fNLS} can be easily generalized to power-type nonlinearities $f(u) = \pm |u|^{2 \sigma} u$ with $0 < \sigma < +\infty$, instead of $\pm |u|^2 u$. More precisely, one obtains local well-posedness in $H^\ell(\R)$ with $\ell \geq \alpha$, and the solution $u \in C^0([0,T); H^\ell(\R))$ extends globally in time in the case when $f(u) = - | u|^{2\sigma} u$ with $0 < \sigma < 2 \alpha$ (focusing $L^2$-subcritical case) or when $f(u) = +|u|^{2 \sigma} u$ with any $0 < \sigma < +\infty$ (defocusing case). 
\end{remark}

Let us now formulate the main result. Given a lattice function $f_h : h \Z \to \C$, we define (see also \cite[Chapter V]{L} and \cite{SSB}) its piecewise linear interpolation $p_h : \R \to \C$ to be given by
\begin{equation} \label{def:ph}
(p_h f_h)(x) := f_h(x_m) + (D^+_h f_h)(x_m) (x-x_m), \quad \mbox{for $x \in [x_m, x_{m+1})$}.
\end{equation}
Here $D^+_h$ denotes the discrete right-hand derivative on $h \Z$ defined as
$$
(D^+_h f_h)(x_m) := \frac{f(x_{m+1}) - f(x_m)}{h} .
$$
On the other hand, given a locally integrable function $f : \R \to \C$, we define its discretization $f_h : h \Z \to \C$ by setting
\begin{equation} \label{eq:discretize}
f_h(x_m) := \frac{1}{h} \int_{x_m}^{x_{m+1}} f(x) \, dx, \quad \mbox{with $x_m = mh$ and $m \in \Z$}.
\end{equation}
It is easy to see that $\| f_h \|_{L^2_h} \leq \| f \|_2$ (see Lemma \ref{prop:unifH2}). Moreover, as we will detail below (see Lemma 
\ref{prop:unifH2} and Lemma \ref{lem:interbound}) some straightforward calculations combined with interpolation theory show that 
\begin{equation*}
\| p_h f_h \|_{H^\alpha} \leq C \| f \|_{H^\alpha} ,
\end{equation*}
for every $0 \leq \alpha \leq 1$, where $C > 0$ is some constant independent of $h >0$.

The main result of this paper now reads as follows. 

\begin{thm}[Continuum Limit] \label{thm:main}
Let $J = ( J_n )_{n=1}^\infty \in \mathcal{K}_s$ for some $\frac 1 2 < s \leq +\infty$, where we assume that $J_1 > 0$ holds.\footnote{This is a convenient and physically reasonable assumption, saying that at least neighboring lattice sites interact.} Furthermore, we define 
$$
\alpha := \left \{ \begin{array}{ll} s, & \quad \mbox{for $\frac 1 2 < s < 1$}, \\
1 , & \quad \mbox{for $s \geq 1$} . \end{array} \right .
$$
Now suppose that $v \in H^\alpha(\R)$ and consider its discretization $v_h : h \Z \to \C$ defined as in \eqref{eq:discretize}. Finally, let $u_h = u_h(t, x_m)$ denote the corresponding unique global solution to \eqref{eq:discivp} with initial datum $v_h \in L^2_h$ given by Proposition \ref{prop:gwp_discrete} below, where we choose
\begin{equation}\label{beta}
\beta(h) := \left \{ \begin{array}{ll} h^{2s}, & \quad \mbox{for $\frac 1 2 < s < 1$}, \\
(- \log h) h^2, & \quad \mbox{for $s=1$},\\
h^{2}, & \quad \mbox{for $s > 1$}. \end{array} \right . 
\end{equation}
Then, for every $0 < T < +\infty$ fixed, we have the convergence 
$$
\mbox{$p_h u_h \weakto u$ weakly-$*$ in $L^\infty ( [0,T]; H^\alpha(\R) )$ as $h \to 0^+$} . 
$$
Here $u \in C^0([0,\infty); H^\alpha(\R))$ is the unique global solution of the initial-value problem \eqref{eq:fNLS} with $\alpha >\frac{1}{2}$ defined above and some constant $c > 0$ that only depends on $J$.  \end{thm}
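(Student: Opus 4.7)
The plan is a standard \emph{weak compactness and passage-to-the-limit} scheme: derive uniform bounds on $p_h u_h$ in $L^\infty_t H^\alpha_x$, extract a weak-$*$ limit along a subsequence $h_k \to 0^+$, identify this limit as the unique solution of \eqref{eq:fNLS}, and conclude convergence of the whole family by the uniqueness statement in Proposition \ref{NLScont}. The ingredients that make this work are (a) the conservation laws of \eqref{eq:discivp}, (b) the pointwise convergence of the discrete dispersion symbol to $c|k|^{2\alpha}$ for the prescribed $\beta(h)$, and (c) an Aubin--Lions-type argument to upgrade to strong convergence needed for the cubic term.

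\textbf{Step 1: uniform $H^\alpha$ bounds.} Conservation of $N(u_h)$ together with $\| p_h f_h \|_{L^2} \leq C \| f_h \|_{L^2_h}$ and $\| v_h \|_{L^2_h} \leq \| v \|_2$ controls $\| p_h u_h \|_{L^2}$ uniformly in $h$. For the homogeneous part, introduce the discrete dispersion symbol
\[
m_h(k) := \frac{2}{\beta(h)} \sum_{n=1}^\infty J_n \bigl( 1 - \cos(nhk) \bigr), \qquad k \in [-\pi/h, \pi/h],
\]
so that the discrete Dirichlet form in $E(u_h)$ equals $\int m_h(k) |\hat u_h(k)|^2 \, dk$ via the Fourier series on $h\Z$. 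I would prove the lower bound $m_h(k) \geq c_1 |k|^{2\alpha}$ uniformly for $|k| \leq \pi/h$ (using the tail $J_n \sim A n^{-1-2s}$ when $\frac12 < s < 1$, and the convergence of $\sum n^2 J_n$ when $s \geq 1$, plus the contribution of $J_1 > 0$ for small $k$), and compare the lattice Fourier coefficients of $u_h$ to the Fourier transform of $p_h u_h$ to transfer the estimate. Combined with a fractional Gagliardo--Nirenberg inequality to absorb the quartic term when the sign is focusing, conservation of $E(u_h)$ then yields
\[
\sup_{0 < h < h_0} \, \sup_{t \in [0,T]} \| p_h u_h(t) \|_{H^\alpha(\R)} \leq C \bigl( \| v \|_{H^\alpha} \bigr).
\]

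\textbf{Step 2: symbol convergence and linear limit.} By Banach--Alaoglu extract $p_{h_k} u_{h_k} \weakto u^*$ weakly-$*$ in $L^\infty_t H^\alpha_x$. To identify $u^*$ I would test \eqref{eq:discivp} against $\varphi \in C_c^\infty((0,T)\times\R)$ after Fourier transform, which reduces the linear part to showing $m_h(k) \to c|k|^{2\alpha}$ pointwise as $h \to 0^+$ and dominated by a $k$-dependent constant locally in $k$. For $\frac12 < s < 1$ one writes
\[
m_h(k) = 2 \sum_{n=1}^\infty (nh)^{1+2s} J_n \cdot \frac{1 - \cos(nhk)}{(nh)^{1+2s}} \cdot h,
\]
recognizes this as a Riemann sum, and uses $n^{1+2s}J_n \to A$ to obtain $m_h(k) \to 2A \int_0^\infty y^{-1-2s}(1 - \cos(ky)) \, dy = c|k|^{2s}$. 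For $s > 1$, Taylor expansion $1 - \cos(nhk) = \frac{1}{2}(nhk)^2 + O((nhk)^4)$ combined with the convergence of $\sum n^2 J_n$ gives $m_h(k) \to c k^2$ with $c = \sum n^2 J_n$. The borderline $s=1$ requires splitting the sum at $n \sim 1/h$: the near-field part contributes the quadratic behavior weighted by $\log(1/h)$ which is exactly cancelled by $\beta(h) = (-\log h)h^2$, while the far-field remainder is negligible.

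\textbf{Step 3: strong compactness and nonlinear limit.} Weak convergence alone does not control $|p_h u_h|^2 p_h u_h$. I would derive from \eqref{eq:discivp} and the symbol bound $|m_h(k)| \leq C|k|^{2\alpha}$ a uniform estimate $\| \partial_t p_h u_h \|_{L^\infty_t H^{-\alpha}_x} \leq C$, which together with the uniform $H^\alpha$ bound and the compact embedding $H^\alpha_{\mathrm{loc}} \hookrightarrow L^p_{\mathrm{loc}}$ (for any $p < \infty$, since $\alpha > \frac12$) yields, via Aubin--Lions, strong convergence $p_{h_k} u_{h_k} \to u^*$ in $L^3_{\mathrm{loc}}([0,T] \times \R)$. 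This is enough to pass to the limit in the cubic nonlinearity, showing $u^*$ solves \eqref{eq:fNLS} in the distributional sense with $u^*(0,\cdot) = v$. By Proposition \ref{NLScont} the limit is unique, so $u^* = u$ and no subsequence had to be extracted. The main obstacle is the quantitative comparison in Step 1 between the lattice Dirichlet form and $\| p_h u_h \|_{\dot H^\alpha}^2$ together with the symbol asymptotics in Step 2, especially at the critical exponent $s=1$, where the logarithmic contribution has to be extracted from the whole sum and not just its leading asymptotic in order to match the prefactor in $\beta(h)$.
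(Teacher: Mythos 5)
Your overall scheme (uniform $H^\alpha$ bounds from the conservation laws, weak-$*$ extraction, identification of the linear part through convergence of the discrete symbol, strong local compactness for the cubic term, and uniqueness to dispense with subsequences) is the same as the paper's, and Steps 2 and 3 are sound: your Fourier-side identification of the dispersive limit is the content of Lemma \ref{lem:conv}, and your Aubin--Lions argument replaces the paper's combination of the $\partial_t$ bound, local Rellich compactness for a.e.\ $t$, and dominated convergence. However, Step 1 has a genuine gap at the borderline exponent $s=1$. There you take $\alpha=1$ and $\beta(h)=(-\log h)h^2$, and the claimed uniform coercivity $m_h(k)\geq c_1|k|^{2}$ on the whole Brillouin zone $|k|\leq\pi/h$ is false: since $\omega(z)=2\sum_n J_n(1-\cos(nz))\leq 4\sum_n J_n$ is bounded, at $|k|\sim \pi/h$ one has
$$
\frac{m_h(k)}{k^2}=\frac{\omega(hk)}{(-\log h)\,h^2k^2}\leq \frac{C}{|\log h|}\longrightarrow 0 \quad\mbox{as } h\to 0^+,
$$
so the discrete Dirichlet form controls only $H^{1-\eps}_h$ uniformly in $h$ (this is exactly the content of Lemma \ref{lem:embedlog}, where the lower bound degenerates for $|k|\geq h^{1/2}$). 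Consequently energy conservation alone cannot deliver $\sup_t\|p_hu_h(t)\|_{H^1}\leq C$ when $s=1$. The difficulty you flag at the end of your proposal --- extracting the logarithm from the whole sum rather than its leading asymptotics --- concerns the symbol \emph{convergence} in Step 2 and does not repair this loss of \emph{coercivity} in Step 1.

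The paper closes this gap by a separate argument (Lemma \ref{lem:bounds}): for $s=1$ the energy still gives a uniform $H^\sigma_h$ bound for some $\frac12<\sigma<1$, hence a uniform $L^\infty_h$ bound by the discrete Sobolev inequality; one then applies $D^+_h$ to the Duhamel formula, uses that $D^+_h$ commutes with $e^{-it\Ldif^J_h}$ and the discrete Leibniz rule, and runs a Gronwall estimate to obtain $\sup_{[0,T]}\|u_h(t)\|_{\tilde H^1_h}\leq C(T,\|v_h\|_{H^1_h})$, with a constant that now depends on $T$ (which is harmless since $T$ is fixed). You would need to insert this, or an equivalent substitute, to make your Step 1 work for $s=1$; for $\frac12<s<1$ and $s>1$ your coercivity claim is correct and your argument goes through as written.
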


%\begin{remark}\label{overage}
%With regard to the discretization \eqref{eq:discretize}, it is easy to see that $v_h \to v$ strongly in $L^2(\R)$ as $h \to 0^+$. 

%\end{remark}

%\begin{remark}Probably a stringer version of Theorem \ref{thm:main} would be to prove the convergence while staying in $H^\alpha$ instead of invoking the $H^1$ norm. This would be indeed more natural since the  the energy in \eqref{energy}  is of order $\alpha$. On the other hand we couldn't find an appropriate interpolation as in 
%\eqref{def:ph} while staying in the $H^\alpha$ space, hence our assumption.
%\end{remark}

\begin{remark}\label{dimension}
We also expect a similar weak-$*$ convergence result in the range $\frac 1 4 \leq s  \leq \frac 1 2$. Note that for $s <\frac 1 4$, the cubic nonlinearity $|u|^2 u$ becomes supercritical. Moreover, for $s=\frac 1 2$ the problem \eqref{eq:fNLS} becomes $L^2$-critical and thus a smallness condition on the initial datum $v$ must be imposed to have global well-posedness in $H^{\frac 1 2}(\R)$; see \cite{Ge}. Also note that for $0 < s < \frac 1 2$, it is presently not known whether the initial value problem \eqref{eq:fNLS} is locally well-posed in $H^s(\R)$. In particular, the continuum limit may depend on the chosen subsequence $h_n \to 0$. Furthermore, as a related problem, it would be desirable to understand the case of higher space dimensions $d \geq 2$. 

In some sense, some arguments  we use below hinge on the fact that we require $H^s(\R^d) \subset L^\infty(\R^d)$, which forces us to assume that $s > \frac{1}{2}$ and $d=1$ at the moment. We leave the extension to higher space dimensions or small $s$ as an interesting open problem.
\end{remark}

%\begin{remark}
%Note that it follows from standard arguments that if $v \in H^\alpha(\R)$ then $p_h v_h \weakto v$ weakly in $H^\alpha(\R)$ as $h \to 0^+$. In particular, we have weak convergence of the initial data  $p_h u_h(0) \weakto v$ weakly in $H^\alpha(\R)$ as $h \to 0^+$.
%\end{remark}

\subsection*{Plan of the Paper}
This paper is organized as follows. In Section \ref{sec:prelim}, we introduce a class of fractional Sobolev type norms on the discrete lattice $h \Z$. Moreover, we prove some uniform embedding and interpolation estimates that are uniform with respect to the lattice constant $h \in (0,  h_0]$ with $0 < h_0 < 1$ being some fixed constant. In Section \ref{discretevolution}, we discuss the discrete evolution problem \eqref{eq:discivp}. Furthermore, we derive a-priori bounds for  $u_h=u_h(t,x_m)$ that are uniform in $h \in (0,h_0]$. Finally,  we prove Theorem \ref{thm:main} in Section \ref{sec:proof} by convergence results for the discrete equation derived below, combined with a suitable weak compactness arguments (inspired by the work of Sulem-Sulem-Bardos \cite{SSB} on the Landau-Lifshitz equation). 

In Appendix A--C, we collect and prove some technical results needed in this paper. 

\subsection*{Acknowledgments}
We are grateful to the anonymous referee for carefully reading our manuscript and improving our paper.

\section{Preliminaries}  \label{sec:prelim}

In this section, we state and prove some technical results that will be needed in the proof of Theorem 1. Throughout this section, we suppose that $0 < h_0 < 1$ is a fixed constant and we consider the family of lattices $h \Z$ with $h \in (0, h_0]$. All constants $C > 0$ appearing in the following inequalities can be chosen to depend only on $h_0 > 0$. 

\subsection{Discrete uniform Sobolev inequalities} 

In the following, we denote $x_m=mh$ with $m \in \Z$. For sequences $u_h, v_h \in \C^{h \Z}$, we define the inner product and norm 
$$
(v_h, u_h)_{L^2_h} := h \sum_{m \in \Z} \overline{u_h(x_m)} v_h(x_m) , \quad \| u_h \|_{L^2_h}^2 := (u_h,u_h)_{L^2_h},
$$ 
and we set $L^2_h := \{ u_h \in \C^{h \Z}: \| u_h \|_{L^2_h} < +\infty \}$. For $u_h \in L^2_h$, we define its Fourier transform $\hat{u}_h : [-\pi, \pi] \to \C$ by
$$
\hat{u}_h( k ) := \frac{1}{\sqrt{2 \pi}} \sum_{m \in \Z} u_h(x_m) e^{-i m k} .
$$
Since $u_h \in L^2_h$, we have $\hat{u}_h \in L^2([-\pi, +\pi])$. Moreover, we have the inversion formula
$$
u_h(x_m) = \frac{1}{\sqrt{2 \pi}} \int_{-\pi}^{+\pi} \hat{u}_h(k) e^{i m k} \, d k,
$$
and Parseval's identity gives us 
$$
(v_h, u_h)_{L^2_h} = h \int_{-\pi}^{+\pi} \overline{\hat{v}_h(k)} \hat{u}_h(k) \, d k .
$$
Using this observation, we introduce the following fractional Sobolev type norm for lattice functions $u_h \in L^2_h$. Let $0 \leq \sigma \leq 1$ be given. We define the norm $\| u_h \|_{H^\sigma_h}$ for $u_h \in L^2_h$ by setting
\begin{equation} \label{def:Hh_sigma}
\| u_h \|_{H^\sigma_h}^2 := h \int_{-\pi}^{+\pi} \big (1 + h^{-2\sigma} |k|^{2\sigma}  \big ) |\hat{u}_h(k)|^2 \, d k
\end{equation}
Clearly, we have $\| u_h \|_{H^0_h} = \| u_h \|_{L^2_h}$. Also, we note that $\| u_h \|_{H^\sigma_h} < +\infty$ for any $u_h \in L^2_h$. However, we shall need precise uniform bounds as $h \to 0^+$. 

\begin{remark} \label{rem:inter}
Note that $\| u_h \|_{H^\sigma_h} \leq C \|u_h \|_{H^\rho_h}$ for $0 \leq \sigma \leq \rho \leq 1$, where the constant $C > 0$ is independent of $h >0$. Furthermore, by a simple interpolation argument, we deduce that 
$$
\| u_h \|_{H^{\sigma_0}_h } \leq \| u_h \|_{H^\sigma_h}^{\sigma_0/\sigma} \| u_h \|_{L^2_h}^{1- \sigma_0/\sigma}
$$
for $0 \leq \sigma_0 \leq \sigma \leq 1$.
\end{remark}

We have the following (discrete) Sobolev estimate that is uniform in $h > 0$.

\begin{lemma}[Discrete uniform Sobolev inequality]  \label{lem:sobo}
For every $\frac 1 2 < \sigma \leq 1$, there exists a constant $C=C(\sigma) >0$ independent of $h > 0$ such that 
$$
\| u_h \|_{L^\infty_h} \leq C \| u_h \|_{H^\sigma_h}
$$
for all $u_h \in L^2_h$. Here $\| u_h \|_{L^\infty_h} = \sup_{m \in \Z} |u(x_m)|$. 
\end{lemma}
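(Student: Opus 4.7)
The plan is to prove this by the standard Fourier approach, exploiting the Fourier inversion formula on the lattice and a Cauchy--Schwarz argument, with the key point being that after a rescaling the elementary integral becomes uniformly bounded in $h$ precisely when $\sigma > 1/2$.

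First, I would start from the inversion formula
$$
u_h(x_m) = \frac{1}{\sqrt{2\pi}} \int_{-\pi}^{+\pi} \hat u_h(k)\, e^{imk}\, dk,
$$
so that $|u_h(x_m)| \leq (2\pi)^{-1/2} \int_{-\pi}^{+\pi} |\hat u_h(k)|\, dk$ for every $m \in \Z$. I would then insert the weight $(1+h^{-2\sigma}|k|^{2\sigma})^{1/2}$ and its inverse, and apply Cauchy--Schwarz in $k$ to obtain
$$
|u_h(x_m)|^2 \leq \frac{1}{2\pi} \left(\int_{-\pi}^{+\pi}\bigl(1+h^{-2\sigma}|k|^{2\sigma}\bigr)|\hat u_h(k)|^2\, dk\right) \left(\int_{-\pi}^{+\pi} \frac{dk}{1+h^{-2\sigma}|k|^{2\sigma}}\right).
$$
The first factor is exactly $h^{-1}\|u_h\|_{H^\sigma_h}^2$ by the definition \eqref{def:Hh_sigma}.

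Next I would handle the second factor by the rescaling $\xi = k/h$, which transforms it into
$$
\int_{-\pi/h}^{+\pi/h} \frac{h\, d\xi}{1+|\xi|^{2\sigma}} \leq h \int_{\R} \frac{d\xi}{1+|\xi|^{2\sigma}} = C(\sigma)\, h,
$$
where the last integral is finite precisely because $2\sigma > 1$. Multiplying the two bounds, the factors of $h$ and $h^{-1}$ cancel, and I obtain $|u_h(x_m)|^2 \leq C(\sigma)\,\|u_h\|_{H^\sigma_h}^2$ with a constant independent of $h$. Taking the supremum over $m \in \Z$ gives the claim.

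The only step that requires any care is the rescaling one: one should verify that extending the integration domain from $[-\pi/h,\pi/h]$ to $\R$ is legitimate (the integrand is non-negative, so this is immediate) and that the resulting constant depends only on $\sigma$, not on $h$ or $h_0$. I do not foresee any real obstacle; the threshold $\sigma > 1/2$ shows up in the expected way, as the critical integrability exponent of $(1+|\xi|^{2\sigma})^{-1}$ on $\R$, which is the exact discrete analogue of the continuous Sobolev embedding $H^\sigma(\R) \hookrightarrow L^\infty(\R)$ for $\sigma > 1/2$.
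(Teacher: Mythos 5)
Your proof is correct and follows essentially the same route as the paper: Fourier inversion, Cauchy--Schwarz with the weight $(1+h^{-2\sigma}|k|^{2\sigma})^{1/2}$, and the rescaling $\xi = k/h$ that makes the elementary integral $\int_\R (1+|\xi|^{2\sigma})^{-1}\,d\xi$ finite precisely for $\sigma>\tfrac12$, with the powers of $h$ cancelling. Nothing to add.
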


\begin{proof}
By the Fourier inversion formula and the Cauchy--Schwarz inequality,
\begin{align*}
\| u_h \|_{L^\infty_h} &  \leq \frac{1}{\sqrt{2 \pi}} \int_{-\pi}^{+\pi} |\hat{u}_h(k)| \, d k \leq C \left ( \int_{-\pi}^{+\pi} \frac{d k}{1+ h^{-2\sigma} |k|^{2\sigma}} \right )^{1/2} h^{-1/2} \| u_h \|_{{H}^\sigma_h}  \\
& \leq C \left ( h \int_{-\infty }^{+\infty} \frac{d z}{1+ |z|^{2\sigma}} \right )^{1/2} h^{-1/2} \| u_h \|_{H^{\sigma}_h} \leq C \| u_h \|_{H^{\sigma}_h} ,
\end{align*}
with some finite constant $C= C(\sigma) > 0$ independent of $h> 0$.\end{proof}

Next, we prove the following discrete Gagliardo--Nirenberg type inequality uniform with respect to $h >0$. 

\begin{lemma}[Discrete uniform Gagliardo--Nirenberg inequality] \label{lem:GN}
Define the discrete norm $\| u_h \|_{L^4_h} = ( h \sum_{m} |u_h(x_m)|^4)^{\frac 1 4 }$. Then for any $\frac 1 4 < \sigma_0 \leq 1$, there exists a constant $C=C(\sigma_0) > 0$ independent of $0 < h \leq h_0$ such that
$$
\| u_h \|_{L^4_h} \leq C \| u_h \|_{H^\sigma_h}^{\sigma_0/\sigma} \| u_h \|_{L^2_h}^{1-\sigma_0/\sigma},$$
for every $\sigma_0 \leq \sigma \leq 1$.
\end{lemma}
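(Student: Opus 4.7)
The plan is to reduce the claim to the single Sobolev-type endpoint embedding
$$
\|u_h\|_{L^4_h} \leq C \|u_h\|_{H^{\sigma_0}_h}
$$
for each $\sigma_0 > 1/4$, uniformly in $h \in (0, h_0]$. Once this is in hand, the full inequality follows for arbitrary $\sigma \in [\sigma_0, 1]$ by combining it with the interpolation bound from Remark 3.1, namely $\|u_h\|_{H^{\sigma_0}_h} \leq \|u_h\|_{H^\sigma_h}^{\sigma_0/\sigma} \|u_h\|_{L^2_h}^{1-\sigma_0/\sigma}$.

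To establish this endpoint embedding, I would pass through a band-limited continuous extension. Given $u_h \in L^2_h$, define $\tilde u : \R \to \C$ by specifying its continuous Fourier transform as
$$
\widehat{\tilde u}(\xi) := h\, \hat u_h(h\xi)\, \mathbf{1}_{[-\pi/h,\,\pi/h]}(\xi) .
$$
The change of variables $k = h\xi$ gives $\|\tilde u\|_{L^2(\R)} = \|u_h\|_{L^2_h}$ and $\|\tilde u\|_{\dot H^\sigma(\R)}^2 = h^{1-2\sigma}\int_{-\pi}^\pi |k|^{2\sigma} |\hat u_h(k)|^2 \, dk$, so $\|\tilde u\|_{H^{\sigma_0}(\R)}$ is comparable to $\|u_h\|_{H^{\sigma_0}_h}$ with constants independent of $h$. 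By construction $\tilde u(x_m) = u_h(x_m)$ for every $m \in \Z$, and $\tilde u$ is band-limited to $[-\pi/h, \pi/h]$. A Plancherel--Polya sampling inequality for such band-limited functions then yields
$$
\|u_h\|_{L^4_h}^4 = h \sum_{m \in \Z} |\tilde u(x_m)|^4 \leq C \|\tilde u\|_{L^4(\R)}^4,
$$
with $C$ independent of $h$. Finally, the classical one-dimensional Sobolev embedding $H^{\sigma_0}(\R) \hookrightarrow L^4(\R)$, valid precisely for $\sigma_0 > 1/4$, gives $\|\tilde u\|_{L^4(\R)} \leq C \|\tilde u\|_{H^{\sigma_0}(\R)}$, closing the chain of estimates.

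The main obstacle I anticipate is the Plancherel--Polya step: one must compare, uniformly in $h$, the discrete $L^4_h$ norm on lattice samples with the continuous $L^4(\R)$ norm of the band-limited interpolant. In the easier regime $\sigma_0 > 1/2$ this detour can be bypassed, since the elementary bound $\|u_h\|_{L^4_h}^4 \leq \|u_h\|_{L^\infty_h}^2 \|u_h\|_{L^2_h}^2$ combined with Lemma 3.1 immediately yields $\|u_h\|_{L^4_h} \leq C \|u_h\|_{H^{\sigma_0}_h}$. For the finer range $1/4 < \sigma_0 \leq 1/2$, however, pointwise samples of $u_h$ are not controlled by $\|u_h\|_{H^{\sigma_0}_h}$, and some version of the band-limited Fourier analysis (or an equivalent direct computation with the autocorrelation of $\hat u_h$ on the torus) appears to be essential.
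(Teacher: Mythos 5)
Your reduction to the endpoint embedding $\| u_h \|_{L^4_h} \leq C \| u_h \|_{H^{\sigma_0}_h}$ followed by the interpolation bound of Remark \ref{rem:inter} is exactly the structure of the paper's proof, and that part is identical. Where you genuinely diverge is in how the endpoint is established. The paper stays entirely on the torus: it views $u_h(x_m)$ as the Fourier coefficients of $\hat u_h \in L^{4/3}([-\pi,\pi])$, applies Hausdorff--Young to get $(\sum_m |u_h(x_m)|^4)^{1/4} \leq C ( \int_{-\pi}^{\pi} |\hat u_h|^{4/3} )^{3/4}$, and then splits off the weight $(1+h^{-2\sigma_0}|k|^{2\sigma_0})$ by H\"older; the condition $\sigma_0 > 1/4$ enters only through the finiteness of $\int_{\R} (1+|z|^{2\sigma_0})^{-2}\,dz$, and the whole computation is four lines. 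Your route instead builds the band-limited interpolant $\tilde u$ with spectrum in $[-\pi/h,\pi/h]$, which is correctly set up (the norm identities and the sampling property $\tilde u(x_m)=u_h(x_m)$ all check out, and the scaling makes the constants $h$-independent), and then imports two external results: the Plancherel--Polya sampling inequality for $p=4$ and the classical embedding $H^{\sigma_0}(\R) \hookrightarrow L^4(\R)$. This works and is a legitimate proof, but note that the step you flag as the main obstacle --- the uniform comparison $h\sum_m |\tilde u(x_m)|^4 \leq C \|\tilde u\|_{L^4(\R)}^4$ --- is precisely the content you would still have to prove or cite carefully, and it is not easier than the paper's direct Hausdorff--Young argument; in effect the paper's computation is a self-contained substitute for Plancherel--Polya in this specific situation. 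Your observation that the elementary $L^\infty$--$L^2$ interpolation only covers $\sigma_0 > 1/2$ is correct and is exactly why neither you nor the paper can avoid some Fourier-analytic input in the range $1/4 < \sigma_0 \leq 1/2$.
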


\begin{proof}
Using the Hausdorff--Young inequality and H\"older's inequality, we conclude that
\begin{align*}
\left ( \sum_{m} |u_h(x_m)|^4  \right )^{1/4} & \leq C \left ( \int_{-\pi}^{+\pi} |\hat{u}_h(k)|^{4/3} \, d k \right )^{3/4} \\
& \leq C \left ( \int_{-\pi}^{+\pi} (1+ h^{-2\sigma_0} |k|^{2\sigma_0} ) |\hat{u}_h(k)|^2 \, d k \right )^{1/2} \\
& \quad \cdot \left (  \int_{-\pi}^{+\pi} \frac{d k }{(1+ h^{-2\sigma_0} |k|^{2\sigma_0} )^{2}}   \right )^{1/4} \\
& \leq C h^{-1/2} \| u_h \|_{H^{\sigma_0}_h} \left ( h  \int_{-\infty}^{+\infty} \frac{d z}{(1+|z|^{2\sigma_0})^2} \right )^{1/4} \\
&  \leq C h^{-1/4} \| u_h\|_{H^{\sigma_0}_h},
\end{align*}
where $C = C(\sigma_0) > 0$ is independent of $h> 0$. This is the desired estimate when $\sigma=\sigma_0$ holds. To complete the proof of Lemma \ref{lem:GN} for $\sigma_0 < \sigma \leq 1$, we simply use the interpolation estimate from Remark \ref{rem:inter} above. \end{proof}

\subsection{Discrete Energy Norm and Estimates} 
Throughout this subsection, we assume that the sequence nonnegative numbers $J= ( J_n )_{n=1}^\infty$ satisfies the following conditions:
\begin{itemize}
\item[(A1)] $J \in \mathcal{K}_s$ for some $0 < s \leq +\infty$;
\item[(A2)] $J_1 > 0$.
\end{itemize} 
Next, we define the linear operator $\Ldif^{J}_h : L^2_h \to L^2_h$ by setting
\begin{equation} \label{def:Jop}
(\Ldif_h^{J} u_h)(x_m) :=  \frac{1}{\beta(h)} \sum_{n \neq m}  J_{|m-n|} \big [ u_h(x_m) - u_h(x_n)  \big ] 
\end{equation}
Here and in what follows, we choose $\beta(h)$ to be given by
\begin{equation} \label{eq:beta}
\beta(h) = \left \{ \begin{array}{ll} h^{2s}, & \quad \mbox{if $J \in \mathcal{K}_s$ with some $0 < s < 1$}, \\
(- \log h) h^2, & \quad \mbox{if $J \in \mathcal{K}_s$ with $s=1$},\\
h^{2}, & \quad \mbox{if $J \in \mathcal{K}_s$ with some $s > 1$}. \end{array} \right . 
\end{equation}
Note that we always impose (without loss of generality) that $0 < h \leq h_0 < 1$ holds.  In particular, we have that $\beta(h) > 0$ is positive. By changing the summation index, we deduce that
\begin{equation} \label{eq:Jgood}
(\Ldif_h^{J} u_h)(x_m) =  \frac{1}{\beta(h)} \sum_{n \neq 0}  J_{|n|} \big [ u_h(x_m) - u_h(x_m - x_n) \big ].
\end{equation}
Clearly, the operator $\Ldif^{J}_h$ is bounded on $L^2_h$ with $\| \Ldif^{J}_h u_h \|_{L^2_h} \leq C \beta(h)^{-1} \| u_h \|_{L^2_h}$, using that the sum $\sum_{n =1}^\infty J_n$ is finite. Also, we easily check that $\Ldif^{J}_h$ is self-adjoint; that is, we have that $(\Ldif^{J}_h)^* = \Ldif^{J}_h$ holds on $L^2_h$. Furthermore, a simple calculation shows 
\begin{equation} 
(u_h, \Ldif^{J}_h u_h )_{L^2_h} = \frac{1}{2 \beta(h)} \sum_{n,m : n \neq m}   J_{|n|} \big | u_h(x_m) - u_h(x_m - x_n) \big |^2.
\end{equation}
Since $J_n \geq 0$ by assumption, this shows that $\Ldif^{J}_h \geq 0$ is nonnegative as an operator. In particular, for any $u_h \in L^2_h$, we can define the norm 
\begin{equation}\label{energynorm}
 \| u_h \|_{H^{J}_h}^2 := (u_h, u_h)_{L^2_h} + (u_h, \Ldif^{J}_h u_h )_{L^2_h} 
\end{equation}
Below, the norm $\| \cdot \|_{H^{J}_h}$ will play the role of an energy norm for the discrete evolution problem \eqref{eq:discivp}. We have the following norm equivalence uniform in $h > 0$. 

\begin{lemma}[Uniform Norm Equivalence] \label{lem:embed}
Suppose $J = ( J_n )_{n=1}^\infty$ satisfies (A1) and (A2) above for some $0 < s \leq +\infty$ with $s \neq 1$. Let
 \begin{equation}\label{alpha}
\alpha = \left \{ \begin{array}{ll} s, & \quad \mbox{for $0 < s < 1$}, \\
1 , & \quad \mbox{for $s \geq 1$} . \end{array} \right .
\end{equation}
%$$
%\beta :=  \left \{ \begin{array}{ll} 2s & \quad \mbox{for $0 < s < 1$} \\
%2 & \quad \mbox{for $s \geq 1$} \end{array} \right .  ,\quad  \sigma :=  \left \{ \begin{array}{ll} s & \quad \mbox{for $0 < s < 1$} \\
%1 & \quad \mbox{for $s \geq 1$} \end{array} \right .  .
%$$
Then there exist constants $A, B > 0$ independent of $h >0$ such that 
$$
A \| u_h \|_{H^{\alpha}_h} \leq \| u_h \|_{H^{J}_h} \leq B \| u_h \|_{H^{\alpha}_h},
$$
for all $u_h \in  L^2_h$.  
\end{lemma}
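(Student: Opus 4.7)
The plan is to diagonalize the nonlocal operator $\Ldif_h^J$ by the discrete Fourier transform and reduce the claimed norm equivalence to a pointwise estimate on its symbol. Using the shift law $\widehat{u_h(\cdot - x_n)}(k) = e^{-ink}\,\hat u_h(k)$ and pairing the $n$ and $-n$ contributions in \eqref{eq:Jgood}, one finds
$$
\widehat{\Ldif_h^J u_h}(k) = m_h^J(k)\, \hat u_h(k), \qquad m_h^J(k) = \frac{2}{\beta(h)} \sum_{n=1}^{\infty} J_n \bigl( 1 - \cos(nk) \bigr),
$$
for $k \in [-\pi, +\pi]$. By Parseval and the definition \eqref{energynorm},
$$
\|u_h\|_{H^J_h}^2 = h \int_{-\pi}^{+\pi} \bigl(1 + m_h^J(k)\bigr) |\hat u_h(k)|^2\, dk,
$$
while $\|u_h\|_{H^\alpha_h}^2 = h \int_{-\pi}^{+\pi}(1 + h^{-2\alpha}|k|^{2\alpha})|\hat u_h(k)|^2\, dk$ from \eqref{def:Hh_sigma}. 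Writing $F(k) := \sum_{n \geq 1} J_n(1 - \cos(nk))$ and recalling that $\beta(h) = h^{2\alpha}$ under the assumption $s \neq 1$, the norm equivalence reduces to the uniform pointwise comparison
$$
F(k) \simeq |k|^{2\alpha}, \qquad k \in (0, \pi],
$$
since then $m_h^J(k) \simeq h^{-2\alpha}|k|^{2\alpha}$ on $(0,\pi]$, and the trivial $1$'s on both sides absorb the small-$|k|$ regime.

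For the upper bound on $F(k)$, I would use $1 - \cos(nk) \leq \min\{2, (nk)^2/2\}$ and split the series at $n \approx 1/|k|$. For $0 < s < 1$, the decay $J_n \lesssim n^{-1-2s}$ from $J \in \mathcal{K}_s$ gives
$$
F(k) \lesssim k^2 \sum_{n \leq 1/|k|} n^{1-2s} + \sum_{n > 1/|k|} n^{-1-2s} \lesssim |k|^{2s},
$$
uniformly on $(0, \pi]$. For $s > 1$ (and trivially for $s = \infty$) the weighted moment $\sum_n n^2 J_n$ is finite, so $F(k) \lesssim k^2 = |k|^{2\alpha}$ follows directly.

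The main technical point is the matching lower bound $F(k) \gtrsim |k|^{2\alpha}$, which I would establish in two regimes. For $|k|$ bounded away from zero, $|k| \in [k_0, \pi]$, the hypothesis $J_1 > 0$ together with $1 - \cos k \geq (2/\pi^2) k^2$ on $[-\pi,\pi]$ gives
$$
F(k) \geq 2 J_1 (1 - \cos k) \gtrsim k^2 \geq k_0^{2-2\alpha} |k|^{2\alpha},
$$
using $2 - 2\alpha \geq 0$; for $s > 1$ this same estimate already suffices for all $k \in [-\pi, \pi]$. The delicate case is small $|k|$ with $0 < s < 1$: fix $\eta \in (0, A)$ and $N_\eta \geq 1$ such that $J_n \geq (A - \eta) n^{-1-2s}$ for $n \geq N_\eta$; for $|k| \leq k_0 := 1/(2 N_\eta)$, restrict the sum to indices $n$ with $nk \in [1/2, 1]$, a block containing $\asymp 1/|k|$ integers, each satisfying $J_n \gtrsim n^{-1-2s} \gtrsim |k|^{1+2s}$ and $1 - \cos(nk) \geq 1 - \cos(1/2) > 0$, yielding
$$
F(k) \gtrsim \frac{1}{|k|} \cdot |k|^{1+2s} = |k|^{2s}.
$$
Combining both regimes produces $F(k) \gtrsim |k|^{2\alpha}$ on $(0, \pi]$, completing the norm equivalence. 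The main obstacle is exactly this lower bound near $k = 0$ for $0 < s < 1$: the short-range contribution $J_1(1 - \cos k) \sim k^2$ is $o(|k|^{2s})$ as $k \to 0$, so one must genuinely exploit the long-range tail of $J$ by counting contributions at scale $n \sim 1/|k|$ in order to recover the correct dispersion order.
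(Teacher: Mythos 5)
Your proof is correct, and its skeleton coincides with the paper's: diagonalize $\Ldif^J_h$ by the discrete Fourier transform and reduce the norm equivalence to the pointwise comparison of $1+\omega(k)/\beta(h)$ with $1+h^{-2\alpha}|k|^{2\alpha}$ on $[-\pi,\pi]$ (inequality \eqref{ineq:embed} in the paper). Where you genuinely diverge is in how the key estimate $\omega(k)\simeq |k|^{2\alpha}$ near $k=0$ is obtained. The paper imports it from Lemma \ref{lem:asymp} in the appendix, which computes the exact limit $\lim_{k\to 0}\omega(k)/|k|^{2\alpha}$ via an approximate-Riemann-sum argument reducing to $\int_0^\infty (1-\cos x)\,x^{-1-2s}\,dx$, combined with a splitting of $J$ into its asymptotic power tail plus a finite perturbation; away from $k=0$ it uses, exactly as you do, $J_1>0$ together with $1-\cos z\ge \tfrac{2}{\pi^2}z^2$ for the lower bound and $\omega(k)\le 4\sum_n J_n$ for the upper. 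You instead prove the two-sided bound directly and more elementarily: the upper bound by splitting the series at $n\sim 1/|k|$ using $1-\cos(nk)\le\min\{2,(nk)^2/2\}$, and --- the genuinely delicate point, which you correctly isolate --- the lower bound near $k=0$ by counting the $\asymp 1/|k|$ indices in the block $nk\in[1/2,1]$, each contributing $\gtrsim |k|^{1+2s}$. Your route is self-contained and never needs the value of the limiting constant, only uniform comparability; the paper's route leans on a heavier asymptotic lemma, but that lemma is not wasted since the exact limiting constant is required anyway in Lemma \ref{lem:conv} to identify the coefficient $c$ of $(-\Delta)^{\alpha}$ in the continuum equation. (One immaterial slip: with your normalization of $F$ the $n=1$ term is $J_1(1-\cos k)$, not $2J_1(1-\cos k)$; this only affects a constant.)
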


\begin{proof}
By using \eqref{eq:Jgood} together with the Fourier inversion formula and Parseval's formula, we notice that
$$
(u_h, \Ldif^{J}_h u_h)_{L^2_h} = h \int_{-\pi}^{+\pi} \frac{\omega(k)}{\beta(h)} |\hat{u}_h(k)|^2 \, dk, 
$$
where
$$
\omega(k) = 2 \sum_{n=1}^\infty J_n \big [ 1- \cos(nk) \big ] .
$$
In view of the definition of $\| \cdot \|_{H^{\alpha}_h}$ it remains to show that, for $|k| \leq \pi$ and $h > 0$, 
\begin{equation} \label{ineq:embed}
A \left (1 + h^{-2\alpha} |k|^{2\alpha} \right ) \leq \left (1 + \frac{ \omega(k) }{\beta(h)} \right ) \leq B \left (1 + h^{-2\alpha} |k|^{2\alpha} \right ),
\end{equation}
for some constants $A, B > 0$ independent of $h > 0$, where we define $\alpha = s$ for $0 < s < 1$ and $\alpha =1$ for $s>1$.

Le us first prove the lower bound in inequality \eqref{ineq:embed}. From Lemma \ref{lem:asymp}, we recall that
\begin{equation} \label{ineq:ks}
\frac{C}{2} |k|^{2 \alpha} \leq \omega(k) \leq  C |k|^{2 \alpha}, \quad \mbox{for $|k|  \leq k_0$},
\end{equation}
with some constant $C > 0$ and $k_0 > 0$ sufficiently small. Furthermore, using that $(1- \cos z) \geq 0$ for all $z \in \R$ and $(1-\cos z) \geq \frac{2}{\pi^2} z^2$ for $|z| \leq \pi$, we find that
\begin{equation} \label{ineq:ks2}
\omega(k) = 2 \sum_{n=1}^\infty J_n \big [ 1- \cos (nk) \big] \geq \frac{4 J_1}{\pi^2} k^2 , \quad \mbox{for $|k| \leq \pi$}.
\end{equation}
Note that $J_1 > 0$ by assumption. Combining the lower bound in \eqref{ineq:ks} with \eqref{ineq:ks2} and using that $\alpha \leq 1$, we infer that
\begin{equation} \label{ineq:ks3}
\omega(k) \geq \delta |k|^{2\alpha}, \quad \mbox{for $|k| \leq \pi$}.
\end{equation}
Here $\delta  = \min \{ \frac{C}{2}, \frac{4 J_1}{\pi^2} |k_0|^{2-2\alpha} \} > 0$ with $C >0$ taken from \eqref{ineq:ks}. Using \eqref{ineq:ks3} and recalling that $\beta(h) = h^{2 \alpha}$, we derive
$$
1 + \frac{\omega(k)}{\beta(h)} \geq 1+ \delta h^{-2 \alpha}|k|^{2 \alpha} \geq A \left ( 1+ h^{-2\alpha} |k|^{2 \alpha} \right ),
$$
for all $|k| \leq \pi$ and $h >0$, where we also used that $(1+\delta |z|^{2\alpha}) \geq A (1+ |z|^{2\alpha})$ for all $z \in \R$ with the constant $A = \min \{ \delta, 1 \} >0$. This shows that the lower bound in \eqref{ineq:embed} holds.

To prove the upper bound in \eqref{ineq:embed} in the case $s \neq 1$, we argue as follows. First, by the upper bound in \eqref{ineq:ks}, we conclude that, for all $h >0$, 
\begin{equation} \label{ineq:ks4}
\frac{\omega(k)}{\beta(h)} \leq C h^{-2 \alpha} |k|^{2\alpha} \leq C (1+ h^{-2 \alpha} |k|^{2\alpha}), \quad \mbox{for $|k| \leq k_0$}.
\end{equation} 
On the other hand, we recall the global upper bound $\omega(k) \leq 4 \sum_{n=1}^\infty J_n \leq C$ for $|k| \leq \pi$. Hence we find that, for any $h > 0$, 
\begin{equation} \label{ineq:ks5}
\frac{\omega(k)}{\beta(h)} \leq C h^{-2 \alpha} \leq C ( 1 + h^{-2 \alpha} |k_0|^{2 \alpha}) \leq C (1 + h^{-2 \alpha} |k|^{2 \alpha}) , \quad \mbox{for $|k| \geq k_0$} .
\end{equation}
Combining now \eqref{ineq:ks4} and \eqref{ineq:ks5}, we deduce that the upper bound in \eqref{ineq:embed} holds. This completes the proof of Lemma \ref{lem:embed}. \end{proof}

Next, we treat the special case when $J = (J_n)_{n=1}^\infty$ belongs to $\mathcal{K}_s$ with $s=1$. 

\begin{lemma}[Norm equivalence for $s=1$] \label{lem:embedlog}
Suppose that $J=(J_n)_{n=1}^\infty$ satisfies (A1) and (A2) above with $s=1$. Then we have 
$$
\| u \|_{H^J_h} \leq C \| u \|_{H^1_h},
$$
with some constant $C > 0$ independent of $h >0$. Moreover, for every $0 < \sigma < 1$ and $h_0 > 0$ sufficiently small, there exists a constant $K > 0$ independent of $0 <h \leq h_0$ such that
$$
\| u \|_{H^\sigma_h} \leq K \| u \|_{H^J_h} .
$$
\end{lemma}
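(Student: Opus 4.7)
The plan is to reduce both inequalities to pointwise comparisons of Fourier multipliers, exactly as in the proof of Lemma \ref{lem:embed}. Writing $\omega(k) = 2\sum_{n\geq 1} J_n[1-\cos(nk)]$ and recalling $\beta(h) = -h^2\log h$ for $s=1$, Parseval's identity identifies the two claims with the symbol estimates
\begin{equation*}
\omega(k) \leq C(-\log h)(h^2 + k^2) \qquad \text{and} \qquad h^{-2\sigma}|k|^{2\sigma} \leq K\bigl(1 + \omega(k)/\beta(h)\bigr),
\end{equation*}
uniformly in $|k| \leq \pi$ and $0 < h \leq h_0$ for $h_0$ sufficiently small. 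The inputs are the $s=1$ matching two-sided asymptotic from Lemma \ref{lem:asymp}, which I expect to read $c_1 k^2 \log(1/|k|) \leq \omega(k) \leq c_2 k^2 \log(1/|k|)$ on $|k| \leq k_0$ small, combined with the global bounds $\omega(k) \geq (4J_1/\pi^2)k^2$ from \eqref{ineq:ks2} and $\omega(k) \leq 4\sum_n J_n < \infty$ on $[-\pi,\pi]$.

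For the first inequality I would split at $|k| = h$. Shrinking $h_0$ so that $h_0 \leq e^{-1/2}$, the function $k \mapsto k^2\log(1/k)$ is nondecreasing on $(0, h_0]$, so for $|k| \leq h$ the upper asymptotic yields $\omega(k) \leq C k^2\log(1/|k|) \leq C h^2\log(1/h)$, which is controlled by $C(-\log h)h^2$. In the range $h \leq |k| \leq k_0$ the trivial inequality $\log(1/|k|) \leq \log(1/h)$ immediately gives $\omega(k) \leq C k^2\log(1/h)$; and for $|k| \geq k_0$ the global upper bound on $\omega$ is a constant, which is in turn dominated by $C(-\log h)k_0^2$ as soon as $(-\log h)$ is large enough.

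For the second inequality I would split into three subcases. The case $|k| \leq h$ is trivial since $h^{-2\sigma}|k|^{2\sigma} \leq 1 \leq K$. For $|k| \geq k_0$, the lower bound $\omega(k) \geq (4J_1/\pi^2)k_0^2$ gives $\omega(k)/\beta(h) \geq c k_0^2/(h^2\log(1/h))$, which dominates $\pi^{2\sigma}h^{-2\sigma}$ because $h^{2-2\sigma}\log(1/h) \to 0$ as $h \to 0^+$. The substantive case is the intermediate regime $h \leq |k| \leq k_0$: substituting $t = |k|/h \in [1, k_0/h]$ and $L = \log(1/h)$ and invoking the lower asymptotic, the target inequality reduces to showing $\psi(t) \geq 1/(c_1 K)$ for
\[
\psi(t) := t^{2-2\sigma}\bigl(1 - \log t/L\bigr).
\]
A direct differentiation shows $\psi$ has a unique interior maximum at $t^\ast := h^{-1}e^{-1/(2-2\sigma)}$ and is monotonic on either side; choosing $h_0 \leq e^{-1/(2-2\sigma)}$ ensures $t^\ast \geq 1$, so that $\psi$ is first increasing from $\psi(1) = 1$ and then decreasing. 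Since $\psi(k_0/h) = (k_0/h)^{2-2\sigma}\log(1/k_0)/L \to +\infty$ as $h \to 0^+$ by polynomial-versus-logarithmic dominance, I conclude $\min_{t\in[1,k_0/h]}\psi(t) \geq 1$ once $h_0$ is chosen small enough, and any $K \geq 1/c_1$ then does the job.

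The main obstacle I anticipate is exactly this logarithmic interplay in the intermediate regime: neither the polynomial lower bound $\omega(k) \geq (4J_1/\pi^2)k^2$ nor the logarithmic lower bound $\omega(k) \geq c_1 k^2\log(1/|k|)$ by itself suffices uniformly in $(|k|, h)$, and the factor $\log(1/|k|)$ has to be weighed carefully against $\log(1/h)$. The explicit monotonicity of $\psi$, together with the crucial observation that the right endpoint $t = k_0/h$ is rescued by the polynomial dominance $(k_0/h)^{2-2\sigma} \gg \log(1/h)$, is what makes the argument close.
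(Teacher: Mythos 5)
Your proof is correct, and the overall strategy coincides with the paper's: both reduce the two norm inequalities via Parseval to pointwise bounds on the multiplier $1+\omega(k)/\beta(h)$, and both feed in the two-sided logarithmic asymptotic of Lemma \ref{lem:asymp} for $s=1$ together with the global bounds $\omega(k)\geq (4J_1/\pi^2)k^2$ and $\omega(k)\leq 4\sum_n J_n$. For the upper bound your treatment (split at $|k|=h$ and $|k|=k_0$, using $\log(1/|k|)\leq\log(1/h)$ in the middle band) is essentially the paper's argument in the variable $z=h^{-1}|k|$. The genuine difference is in the lower bound, where you choose a different decomposition: the paper splits at $|k|=h^{1/2}$, so that on $|k|\leq h^{1/2}$ the observation $|\log|k||\geq\tfrac12|\log h|$ cancels the logarithm in $\beta(h)$ and yields the clean quadratic bound $\omega(k)/\beta(h)\geq \tfrac{c}{2}h^{-2}k^2$, while on $|k|\geq h^{1/2}$ only the polynomial bound \eqref{ineq:ks2} is used, the loss of $|\log h|$ being absorbed by $h^{-2\eps}|k|^{2\eps}\geq h^{-\eps}\gg|\log h|$. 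You instead make the regime $|k|\leq h$ trivial and handle the whole band $h\leq|k|\leq k_0$ at once by minimizing $\psi(t)=t^{2-2\sigma}(1-\log t/L)$ over $t\in[1,k_0/h]$, locating the unique interior maximum and checking both endpoints ($\psi(1)=1$ and $\psi(k_0/h)\to\infty$ by polynomial-versus-logarithmic dominance). Both arguments close; the paper's choice of the $h^{1/2}$ threshold avoids any calculus and keeps each regime to a one-line estimate, whereas your analysis is more systematic in that it identifies exactly where the multiplier ratio is smallest and makes the uniformity in $(k,h)$ completely explicit. One small point worth recording in a write-up: in your middle regime one should note that $1-\log t/L>0$ throughout $[1,k_0/h]$ (which holds since $\log(k_0/h)=L+\log k_0<L$ for $k_0<1$), so that the lower asymptotic \eqref{ineq:bordercase2} is actually being applied with a positive right-hand side.
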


\begin{proof}
Similar as in the proof of Lemma \ref{lem:embed}, we have to show that
\begin{equation} \label{ineq:bordercase}
A \left ( 1 + h^{-2 \sigma} |k|^{2 \sigma} \right ) \leq \left (1 + \frac{\omega(k)}{\beta(h)} \right ) \leq B \left ( 1 + h^{-2} |k|^2 \right ), \quad \mbox{for $|k| \leq \pi$},
\end{equation}
with some constant $A > 0$, which may depend on $0 < \sigma < 1$, and some constant $B > 0$ independent of $h > 0$. Recall that $\beta(h) = -(\log h) h^2 > 0$, since we can assume that $0 < h \leq h_0 < 1$ holds. For later use, we recall from Lemma \ref{lem:asymp} the bound
\begin{equation} \label{ineq:bordercase2}
- c (\log |k|) |k|^2 \leq \omega(k) \leq - C (\log |k|) |k|^2, \quad \mbox{for $|k| \leq k_0$},
\end{equation}
where $c > 0$ and $C > 0$ are some constants and $k_0 > 0$ is sufficiently small. Note that $c > 0$, $C > 0$ and $k_0 > 0$  only depend on $J=(J_n)_{n=1}^\infty$. 

%Note that we can assume that $0 < h_0 < k_0 < 1$ holds. Hence, we have that
%\begin{equation} \label{ineq:bordercase2}
%- \frac{C}{2} (\log |k|) |k|^2 \leq \omega(k) \leq - C (\log |k|) |k|^2, \quad \mbox{for $|k| \leq h$},
%\end{equation}
%for all $0 < h \leq h_0$. 

First, we prove the upper bound in \eqref{ineq:bordercase}. If we now let $z = h^{-1} |k|$ with $|k| \leq k_0$, then the upper bound in \eqref{ineq:bordercase2} gives us 
\begin{align}
\frac{\omega(k)}{\beta(h)} & \leq C \frac{ \left | \log ( h z) \right | }{\left | \log h \right | } z^2 = C \frac{\left | \log z + \log h \right |}{\left | \log h \right | } z^2 \leq  C \left ( \frac{ \left | \log z \right |}{\left |\log h \right |} z^2 + z^2  \right ) 
\end{align}
we will show that 
\begin{equation} \label{ineq:logtrick}
\frac{\left | \log z \right | }{\left |\log h \right |} z^2  \leq C (1 + z^2), \quad \mbox{for $z \leq h^{-1} k_0$ and $0 < h \leq h_0 < 1$},
\end{equation}
where $C > 0$ only depends on $k_0 >0 $ and $h_0 > 0$, 
which allows us to conclude that 
\begin{equation} \label{ineq:uplog1}
\frac{\omega(k)}{\beta(h)} 
 \leq C \left (1 + z^2 \right ) = C \left (1 + h^{-2} |k|^2 \right ), \quad \mbox{for $|k| \leq k_0$},
\end{equation}
with some constant $C >0$ that only depends on $h_0 > 0$ and $k_0 > 0$.
To show \eqref{ineq:logtrick}, we first note that $\left | \log z \right | z^2 \leq C$ for $z \leq 1$ and hence
$$
\frac{\left | \log z \right | }{\left |\log h \right |} z^2 \leq \frac{C}{\left | \log h \right | } \leq \frac{C}{\left |\log h_0 \right |},   \quad \mbox{for $z \leq 1$} ,
$$
using also that $\left | \log h \right |^{-1} \leq \left | \log h_0 \right |^{-1}$ for $0 < h \leq h_0 <1$. On the other hand, we have that $z \mapsto |\log z|$ is monotone increasing on the interval $[1, h^{-1} k_0]$. Therefore, 
\begin{align*}
\frac{\left | \log z \right | }{\left |\log h \right |} z^2 & \leq \frac{ \left | \log (h^{-1} k_0) \right | }{\left | \log h \right |} z^2 =   \frac{ \left | \log h - \log k_0 \right | }{\left | \log h \right |} z^2 \\
&  \leq \left  (1 + \frac{\left | \log k_0 \right |}{\left | \log h_0 \right |} \right ) z^2 \leq C z^2,  \quad \mbox{for $1 \leq z \leq h^{-1} k_0$} .
\end{align*}
Combining the previous estimates, we see that \eqref{ineq:logtrick} follows.

To complete the proof of the upper bound in \eqref{ineq:bordercase}, we recall that $\omega(k) \leq C$ for $|k| \leq \pi$. This yields that
\begin{align} \label{ineq:uplog2}
\frac{\omega(k)}{\beta(h)} & \leq \frac{C}{ \left | \log h_0 \right | h^2} \leq C h^{-2} \leq C \left (1 + h^{-2} |k_0|^2 \right ) \\
& \leq C \left  (1 + h^{-2} |k|^2 \right ) , \quad \mbox{for $|k| \geq k_0$}, \nonumber
\end{align}
using again that $\left | \log h \right |^{-1} \leq \left | \log h_0 \right |^{-1}$. From \eqref{ineq:uplog1} and \eqref{ineq:uplog2}, we deduce that the upper bound in \eqref{ineq:bordercase} holds. 

It remains to establish the lower bound in \eqref{ineq:bordercase}, which is slightly more tedious. We argue as follows. First, we notice that we can assume $0 < h_0 < 1$ satisfies $(h_0)^{\frac 1 2} \leq k_0$, where $k_0 > 0$ is the constant in \eqref{ineq:bordercase2}. Recalling \eqref{ineq:bordercase2} and using the fact that 
$|k| \mapsto \left | (  \log |k|  ) \right |$ is monotone decreasing on the interval $(0, h^{\frac 1 2}]$ where $h < 1$, we obtain  the lower bound
$$
\frac{\left | ( \log |k| ) \right |}{\left | \log h \right |} \geq \frac{\left | \log (h^{1/2}) \right |}{\left | \log h \right |} = \frac{1}{2}, \quad \mbox{for $|k| \leq h^{\frac 1 2}$}.
$$
As a consequence  for all $0 < h \leq h_0$, 
\begin{equation} \label{ineq:loglower1}
\frac{\omega(k)}{\beta(h)} \geq \frac{c}{2} h^{-2} k^2, \quad \mbox{for $|k| \leq h^{\frac 1 2}$}.
\end{equation}
Next, by the same argument as in the proof of Lemma \ref{lem:embed}, we have the general lower bound $\omega(k) \geq C k^2$ for $|k| \leq \pi$ with some constant $C >0$. Therefore, 
\begin{equation} \label{ineq:loglower2}
\frac{\omega(k)}{\beta(h)} \geq \frac{C}{\left | \log h \right |} h^{-2} k^2, \quad \mbox{for $|k| \leq \pi$}. 
\end{equation}
Note that $-(\log h) = \left | \log h \right |$, since in particular $h < 1$. Next, let $0 < \sigma < 1$ be given. We claim that, for $h_0 > 0$ sufficiently small, there exists a constant $A > 0$ (depending only on $h_0 > 0$ and $0 < \sigma <1$) such that
\begin{equation} \label{ineq:logtrick3}
\frac{1}{\left | \log h \right |} h^{-2} k^2 \geq A h^{-2\sigma} |k|^{2\sigma}, \quad \mbox{for $|k| \geq h^{\frac 1 2}$}.
\end{equation} 
Indeed, let $\eps = 1- \sigma$. For $|k| \geq h^{\frac 1 2}$, we see that
$$
\frac{1}{\left | \log h \right |} \frac{ h^{-2} k^2}{ h^{-2 \sigma} |k|^{2 \sigma}} = \frac{h^{-2 \eps} |k|^{2 \eps}}{\left | \log h \right |}  \geq \frac{ h^{-2 \eps} h^{\eps}}{\left | \log h \right |} = \frac{h^{-\eps}}{\left | \log h \right |} =: f(h) .
$$
Since $\eps > 0$, we see that $f(h) \to +\infty$ as $h \to 0^+$. Hence, by choosing $h_0 > 0$ sufficiently small, we obtain that
$$
f(h) \geq A , \quad \mbox{for $0 < h \leq h_0$},
$$
where $A > 0$ is some positive constant that depends only on $h_0 > 0$ and $0 < \sigma < 1$. This proves that estimate \eqref{ineq:logtrick3} holds. In view of \eqref{ineq:loglower2}, we deduce that
\begin{equation} \label{ineq:loglower3}
\frac{\omega(k)}{\beta(h)} \geq C h^{-2 \sigma} |k|^{2 \sigma}, \quad \mbox{for $h^{\frac 1 2} \leq |k| \leq \pi$},
\end{equation}
where $C >0$ only depends on $h_0 > 0$ and $0 < \sigma < 1$. 

Finally, we recall \eqref{ineq:loglower1} and deduce that
\begin{equation} \label{ineq:loglower4}
1+ \frac{\omega(k)}{\beta(h)} \geq C \left ( 1 + h^{-2\sigma} |k|^{2 \sigma} \right ), \quad \mbox{for $|k| \leq h^{\frac 1 2}$},
\end{equation}
where we use that $1+ \delta t^2 \geq C ( 1+ |t|^{2 \sigma})$ for all $t \in \R$ with $0 < \sigma < 1$ and $\delta > 0$, where $C > 0$ only depends on $\sigma$ and $\delta$. Combining now \eqref{ineq:loglower4} and \eqref{ineq:loglower3}, we conclude that the lower bound in \eqref{ineq:bordercase} holds. This completes the proof of Lemma \ref{lem:embedlog}. \end{proof}

Next, we proceed to study the relation of the scale of discrete Sobolev type norms $\| \cdot \|_{H^s_h}$ to the following (classical) discrete Sobolev norm given by
\begin{equation}\label{classicaldiscrete}
 \| u_h \|_{\tilde{H}^1_h}^2 := (u_h, u_h)_{L^2_h} + (D^+_h u_h, D^+_h u_h)_{L^2_h} 
\end{equation}
Here $D^+_h$ denotes the discrete right-hand derivative on the lattice $h \Z$, i.\,e., we have
$$
(D^+_h u_h)(x_m) := \frac{ u_h(x_{m+1})-u_h(x_m)  }{h} .
$$
For later use, we also derive the following uniform embedding estimate.

\begin{lemma} \label{lem:hfull}
For every $0 < \sigma \leq 1$, there exists a constant $C = C(s) > 0$ independent of $h >0$ such that
$$
\| u_h \|_{H^\sigma_h} \leq C \| u_h \|_{\tilde{H}^1_h} , \quad \| u_h \|_{\tilde{H}^1_h} \leq C \| u_h \|_{H^1_h},
$$
for all $u_h \in L^2_h$.
\end{lemma}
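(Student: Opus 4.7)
The plan is to do everything in Fourier space. The key observation is that the discrete forward difference $D^+_h$ is a Fourier multiplier on the lattice: using the definition of $\hat u_h$ and a shift of summation index,
\[
\widehat{D^+_h u_h}(k) \;=\; \frac{e^{ik}-1}{h}\,\hat u_h(k),
\]
so that $|\widehat{D^+_h u_h}(k)|^2 = h^{-2}\,4\sin^2(k/2)\,|\hat u_h(k)|^2$. Parseval's identity then gives the clean formula
\[
\|u_h\|_{\tilde H^1_h}^{2}\;=\;h\int_{-\pi}^{+\pi}\!\Big(1+\tfrac{4\sin^{2}(k/2)}{h^{2}}\Big)|\hat u_h(k)|^{2}\,dk.
\]

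Next I would compare this with the expression $\|u_h\|_{H^1_h}^2 = h\int_{-\pi}^{\pi}(1+h^{-2}|k|^2)|\hat u_h(k)|^2\,dk$ coming from \eqref{def:Hh_sigma}. The elementary inequalities
\[
\frac{4}{\pi^{2}}\,k^{2}\;\leq\;4\sin^{2}(k/2)\;\leq\;k^{2}\qquad\text{for }|k|\leq\pi
\]
(the lower bound from concavity of $\sin$ on $[0,\pi/2]$, the upper from $|\sin t|\leq|t|$) yield the pointwise two-sided bound
\[
\tfrac{4}{\pi^{2}}\bigl(1+h^{-2}|k|^{2}\bigr)\;\leq\;1+\tfrac{4\sin^{2}(k/2)}{h^{2}}\;\leq\;1+h^{-2}|k|^{2},
\]
valid uniformly in $h>0$ and $|k|\leq\pi$. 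Integrating against $h|\hat u_h(k)|^{2}$ immediately gives the equivalence
\[
\tfrac{2}{\pi}\,\|u_h\|_{H^1_h}\;\leq\;\|u_h\|_{\tilde H^1_h}\;\leq\;\|u_h\|_{H^1_h},
\]
which in particular yields the second inequality of the lemma.

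For the first inequality with general $0<\sigma\leq 1$, I would just invoke the monotonicity noted in Remark \ref{rem:inter}, namely $\|u_h\|_{H^{\sigma}_h}\leq C\|u_h\|_{H^{1}_h}$ for $0\leq\sigma\leq 1$ with $C$ independent of $h$. Chaining this with the equivalence above produces
\[
\|u_h\|_{H^{\sigma}_h}\;\leq\;C\|u_h\|_{H^{1}_h}\;\leq\;\tfrac{\pi}{2}C\,\|u_h\|_{\tilde H^1_h},
\]
which is what we want. There is really no obstacle here: the whole content is the Fourier identity for $D^+_h$ plus the standard two-sided comparison between $|k|^2$ and $4\sin^2(k/2)$ on $[-\pi,\pi]$; the uniformity in $h$ is automatic because all the $h$-dependence has been factored into the prefactors $h^{-2}$ on both sides and the comparison constants $4/\pi^2$ and $1$ are $h$-independent.
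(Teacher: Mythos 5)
Your proof is correct and follows essentially the same route as the paper: both rest on the Parseval identity $\| u_h \|_{\tilde{H}^1_h}^2 = h\int_{-\pi}^{\pi}\bigl(1+h^{-2}\,4\sin^2(k/2)\bigr)|\hat{u}_h(k)|^2\,dk$ and the two-sided comparison $\tfrac{4}{\pi^2}k^2 \leq 4\sin^2(k/2)\leq k^2$ on $[-\pi,\pi]$. The only cosmetic difference is that you first establish the full equivalence of $\|\cdot\|_{\tilde H^1_h}$ and $\|\cdot\|_{H^1_h}$ and then invoke the monotonicity of Remark \ref{rem:inter}, whereas the paper proves the needed pointwise multiplier bound $(1+h^{-2\sigma}|k|^{2\sigma})\leq C(1+h^{-2}\Omega(k))$ directly.
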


\begin{proof}
Using the Fourier transform and Parseval's identity, we find that
$$
\| u_h \|_{\tilde{H}^1_h}^2 =  h \int_{-\pi}^{+\pi} \big ( 1+ h^{-2} \Omega(k) \big )  |\hat{u}_h(k)|^2 \, d k
$$
where
$$
\Omega(k) := 2 - 2 \cos ( k ) = 4 \sin^2 (k/2).
$$
The claimed bound follows from Parseval's identity, provided we can show that
\begin{equation*} 
(1+ h^{-2 \sigma} |k|^{2\sigma} )  \leq C ( 1+ h^{-2} \Omega(k) ), \quad \mbox{for $|k| \leq \pi$ and $h > 0$}.
\end{equation*}
with some constant $C=C(\sigma)> 0$ independent of $h >0$. Indeed, we simply note that $\Omega(k) = 4 \sin^2(k/2) \geq \frac{4}{\pi^2} k^2$ for $|k| \leq \pi$. Hence first of the desired inequalities follows from the fact that $(1+|z|^{2\sigma}) \leq C (1+ z^2)$ for all $z \in \R$, with some constant $C=C(\sigma) >0$, provided that $0 \leq \sigma \leq 1$ holds. Moreover,  is easy to see that we have $(1+h^{-2} \Omega(k)) \geq C (1+ h^{-2} k^2)$ for $|k| \leq \pi$ and $h >0$. This shows the second inequality stated in Lemma \ref{lem:hfull}.  \end{proof}

\subsection{Interpolations and norm estimates}
In this subsection we collect some technical results about the discretization and interpolation of functions. More precisely, for a locally integrable function $f : \R \to \C$, we recall that its discretization $f_h : h \Z \to \C$ is given by
$$
f_h(x_m) = \frac{1}{h} \int_{x_m}^{x_{m+1}} f(x) \, dx, \quad \mbox{with $x_m=hm$ and $m \in \Z$}.
$$
Following \cite{L}, we define piecewise constant interpolation  $q_h f_h$ by
\begin{equation} \label{def:qh}
(q_h f_h)(x) := f_h(x_m), \quad \mbox{for $x \in [x_m, x_{m+1})$.}
\end{equation}
Furthermore, we recall its piecewise linear interpolation $p_h f_h$ introduced in \eqref{def:ph}. We begin with the following simple fact.

\begin{lemma} \label{prop:unifH2}
For any $0 \leq \sigma \leq 1$ and $f \in H^s(\R)$, we have that
$$
\| f_h \|_{H^\sigma_h} \leq C \| f \|_{H^{\sigma}},
$$
where the constant $C > 0$ is independent of $h> 0$ and $f$.
\end{lemma}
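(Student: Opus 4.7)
The plan is to translate the discretization into Fourier space and reduce the bound to a pointwise comparison of multiplier symbols. The endpoint $\sigma = 0$ is immediate from Jensen's inequality applied to the average defining $f_h$: from $|f_h(x_m)|^2 \leq h^{-1} \int_{x_m}^{x_{m+1}}|f|^2 \, dx$ one obtains $\|f_h\|_{L^2_h}^2 \leq \|f\|_{L^2}^2$. For $0 < \sigma \leq 1$ I would proceed as follows.

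First I would observe that $f_h$ is the restriction to $h\Z$ of the averaged function $g := h^{-1}\mathbf{1}_{[-h,0]} * f$, whose Fourier transform is $\hat g(\xi) = m_h(\xi)\hat f(\xi)$ with $m_h(\xi) := (e^{i\xi h}-1)/(i\xi h)$. The Poisson summation formula applied to the sampled sequence $g(x_m)$ then yields
$$\hat f_h(k) = \frac{1}{h}\sum_{\ell \in \Z} m_h(\xi_\ell)\hat f(\xi_\ell), \qquad \xi_\ell := \frac{k+2\pi\ell}{h}, \quad k \in [-\pi,\pi].$$
The crucial identity is
$$\sum_{\ell \in \Z} |m_h(\xi_\ell)|^2 = 1 \qquad \text{for every $k \in [-\pi,\pi]$},$$
which is equivalent to the cotangent expansion $\sum_\ell (k+2\pi\ell)^{-2} = (4\sin^2(k/2))^{-1}$. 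Cauchy--Schwarz then gives $|\hat f_h(k)|^2 \leq h^{-2}\sum_\ell |\hat f(\xi_\ell)|^2$.

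To handle the weighted norm I would compare the discrete weight $1 + h^{-2\sigma}|k|^{2\sigma}$ with the continuum weight $1 + |\xi_\ell|^{2\sigma}$ pointwise. For $\ell = 0$ one has $|\xi_0|=|k|/h$ and the two weights are equal; for $\ell \neq 0$ one has $|\xi_\ell| \geq \pi/h \geq |k|/h$ on $|k|\leq\pi$, so $h^{-2\sigma}|k|^{2\sigma} \leq |\xi_\ell|^{2\sigma}$. Hence $1+h^{-2\sigma}|k|^{2\sigma} \leq 1+|\xi_\ell|^{2\sigma}$ for every $\ell$. Multiplying by $h$, integrating over $k \in [-\pi,\pi]$, and changing variables $\xi = \xi_\ell$ in each term (the intervals $[(2\pi\ell-\pi)/h,\,(2\pi\ell+\pi)/h]$ for $\ell \in \Z$ tile $\R$) gives $\|f_h\|_{H^\sigma_h}^2 \leq \int_{\R}(1+|\xi|^{2\sigma})|\hat f(\xi)|^2\,d\xi = \|f\|_{H^\sigma}^2$.

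The only step deserving real care is the identity $\sum_\ell |m_h(\xi_\ell)|^2 = 1$; without it the Cauchy--Schwarz step would pick up an $h$-dependent loss. An alternative that bypasses Poisson summation is to prove the $\sigma=1$ case via the representation $(D^+_h f_h)(x_m) = h^{-2}\iint_{[0,h]^2} f'(x_m+y+z)\,dy\,dz$, which by Cauchy--Schwarz and Fubini gives $\|f_h\|_{\tilde H^1_h} \leq C\|f\|_{H^1}$; Lemma \ref{lem:hfull} then yields $\|f_h\|_{H^1_h} \leq C\|f\|_{H^1}$, and complex interpolation between $\sigma=0$ and $\sigma=1$ completes the argument (using that $[L^2_h, H^1_h]_\sigma$ is equivalent to $H^\sigma_h$ with constants independent of $h$, via the elementary equivalence $(1+x)^\sigma \sim 1+x^\sigma$).
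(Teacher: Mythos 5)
Your main argument is correct, and it takes a genuinely different route from the paper. The paper proves only the two endpoints in physical space --- $\sigma=0$ by Cauchy--Schwarz on the averages, and $\sigma=1$ by writing $D^+_h f_h$ as a double average of $f'$ (exactly your alternative at the end, via Lemma \ref{lem:hfull}) --- and then invokes the real interpolation identities $H^\sigma=[L^2,H^1]_{\sigma,2}$ and $H^\sigma_h=[L^2_h,H^1_h]_{\sigma,2}$, leaving the uniformity in $h$ of the latter identification implicit. Your Fourier route instead handles all $0\leq\sigma\leq1$ in one stroke: the aliasing formula $\hat f_h(k)=h^{-1}\sum_\ell m_h(\xi_\ell)\hat f(\xi_\ell)$ with $|m_h(\xi)|^2=4\sin^2(h\xi/2)/(h\xi)^2$, the identity $\sum_\ell|m_h(\xi_\ell)|^2=4\sin^2(k/2)\sum_\ell(k+2\pi\ell)^{-2}=1$, and the per-term weight comparison $1+h^{-2\sigma}|k|^{2\sigma}\leq 1+|\xi_\ell|^{2\sigma}$ (with equality at $\ell=0$) combine correctly to give $\|f_h\|_{H^\sigma_h}\leq C\|f\|_{H^\sigma}$ with an essentially sharp constant, the only slack coming from the equivalence $(1+|\xi|^2)^\sigma\sim 1+|\xi|^{2\sigma}$. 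What your approach buys is transparency and sharpness: no interpolation black box, a visibly $h$-independent constant, and a single computation for all $\sigma$. What it costs is the need to justify the Poisson summation/aliasing identity for $f$ merely in $H^\sigma$ (which you should do by proving it for Schwartz $f$ and extending by density, using that both sides are bounded maps of $\hat f\in L^2$ --- the right-hand side precisely because of your identity $\sum_\ell|m_h(\xi_\ell)|^2=1$), plus the classical partial-fraction expansion of $1/\sin^2$. Both routes are sound; yours is arguably the cleaner self-contained proof of this particular lemma, while the paper's endpoint-plus-interpolation scheme is reused verbatim for Lemma \ref{lem:interbound} and Proposition \ref{prop:dual_ph}, which is why the authors set it up that way.
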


\begin{proof}
First, we note that, by standard interpolation theory, we have (with norm equivalences) that 
$$
H^{\sigma}(\R) = \left [L^2(\R), H^1(\R) \right ]_{\sigma,2} , \quad H^\sigma_h = \left [ L^2_h, H^1_h \right ]_{\sigma,2}, \quad \mbox{for $0 \leq \sigma \leq 1$}.
$$
Hence, it suffices to prove the claimed bound for the endpoint cases $s=0$ and $s=1$. Indeed, by the Cauchy--Schwarz inequality,
\begin{align*}
\| f_h \|_{L^2_h} & = h \sum_m \frac{1}{h^2} \left | \int_{mh}^{(m+1) h} f(x) \,dx \right |^2 \leq  \sum_m \int_{mh}^{(m+1)h} |f(x)|^2 \, dx = \| f \|_2^2.
\end{align*}
To deal with the case when $s=1$, we note that, by the generalized mean-value theorem and the Cauchy-Schwarz inequality, we have
$$
\int_{I} \left | f(x+h)- f(x) \right |^2 \,dx \leq h^2 \int_I \int_0^1 |f'(x+th)|^2 \, dt \, dx,
$$
for any interval $I \subset \R$.  Using this bound, the Cauchy-Schwarz inequality again and Fubini's theorem, we deduce that
\begin{align*}
\| D_h^+ f_h \|_{L^2_h}^2 & = h \sum_m \frac{1}{h^4} \left | \int_{mh}^{(m+1)h} \left ( f(x+h)-f(x) \right ) \,dx \right |^2 \\
& \leq h \sum_m \frac{1}{h^4} \cdot h \cdot h^2 \cdot  \int_{mh}^{(m+1)h} \int_0^1 \left | f'(x+th) \right |^2 \, dt \, dx \\
& = \int_0^1 \sum_m  \int_{mh}^{(m+1)h} |f'(x+th)|^2 \,  dx \, dt = \int_0^1 \| f' \|_{L^2}^2 \, dt = \| f' \|_{L^2}^2,
\end{align*}
which shows that $\| D^+_h f_h \|_{L^2_h} \leq  \| f' \|_{L^2}$. Finally, with help of Lemma \ref{lem:hfull}, we obtain
$$
\| f_h \|_{H^1_h} \leq C \| f_h \|_{\tilde H^1_h} \leq C \| f \|_{H^1},
$$
with $C > 0$ independent of $h> 0$ and $f$. The proof of Lemma \ref{prop:unifH2} is now complete. \end{proof}

As a next result, we derive uniform estimates with respect to (fractional) Sobolev norms.

\begin{lemma} \label{lem:interbound}
For all $0 \leq \sigma \leq 1$, we have the bounds
$$
\| q_h f_h \|_2 \leq \| f_h \|_{L^2_h} , \quad \| p_h f_h \|_{H^\sigma} \leq C  \| f_h \|_{H^\sigma_h},
$$
with some constant $C > 0$ independent of $h > 0$ and $f$.
\end{lemma}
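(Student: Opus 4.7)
The first inequality is trivial: since $(q_hf_h)(x)=f_h(x_m)$ on $[x_m,x_{m+1})$, one has
$$
\|q_hf_h\|_2^2=\sum_m\int_{x_m}^{x_{m+1}}|f_h(x_m)|^2\,dx=h\sum_m|f_h(x_m)|^2=\|f_h\|_{L^2_h}^2,
$$
in fact with equality. So the real content is the bound on $p_hf_h$, and my plan is to establish it at the two endpoints $\sigma=0$ and $\sigma=1$ by direct estimation, then invoke interpolation exactly as in the proof of Lemma \ref{prop:unifH2}.

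For $\sigma=0$, write $t=(x-x_m)/h\in[0,1)$ and $a=f_h(x_m)$, $b=f_h(x_{m+1})$, so that
$(p_hf_h)(x)=(1-t)a+tb$. By convexity of $z\mapsto z^2$ one has $|(1-t)a+tb|^2\le(1-t)|a|^2+t|b|^2$, hence integrating over $[x_m,x_{m+1})$ and summing in $m$ gives
$$
\|p_hf_h\|_2^2\le\tfrac{h}{2}\sum_m\bigl(|f_h(x_m)|^2+|f_h(x_{m+1})|^2\bigr)=\|f_h\|_{L^2_h}^2.
$$
For $\sigma=1$, observe that on each open interval $(x_m,x_{m+1})$ the derivative $(p_hf_h)'$ equals the constant $(D^+_hf_h)(x_m)$, so that
$$
\|(p_hf_h)'\|_2^2=h\sum_m|(D^+_hf_h)(x_m)|^2=\|D^+_hf_h\|_{L^2_h}^2.
$$
Combining with the $L^2$ bound above, $\|p_hf_h\|_{H^1}^2\le\|f_h\|_{L^2_h}^2+\|D^+_hf_h\|_{L^2_h}^2=\|f_h\|_{\tilde H^1_h}^2$, and Lemma \ref{lem:hfull} turns this into $\|p_hf_h\|_{H^1}\le C\|f_h\|_{H^1_h}$ with $C$ independent of $h$.

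Having controlled the two endpoints, I view $p_h$ as a linear map $p_h:L^2_h\to L^2(\R)$ and $p_h:H^1_h\to H^1(\R)$ with operator norms bounded uniformly in $h$. Using the identifications $[L^2,H^1]_{\sigma,2}=H^\sigma(\R)$ and $[L^2_h,H^1_h]_{\sigma,2}=H^\sigma_h$ (with norm equivalences uniform in $h$, as already invoked in Lemma \ref{prop:unifH2}), real interpolation yields the desired bound $\|p_hf_h\|_{H^\sigma}\le C\|f_h\|_{H^\sigma_h}$ for every $0\le\sigma\le 1$. The only delicate point I expect is making the interpolation constants $h$-independent, but this is precisely why the discrete spaces $H^\sigma_h$ were defined through the Fourier-multiplier expression \eqref{def:Hh_sigma}, which makes the $K$-functional characterization uniform in $h$; so the argument should go through verbatim.
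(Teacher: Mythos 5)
Your proof is correct and follows essentially the same route as the paper: establish the endpoint bounds at $\sigma=0$ and $\sigma=1$ and then interpolate, with the $\sigma=1$ case resting on the same observation that $(p_hf_h)'$ is the piecewise constant function $q_h(D^+_hf_h)$ and on Lemma \ref{lem:hfull}. Your convexity argument at $\sigma=0$ is a slightly cleaner variant (giving constant $1$) of the paper's decomposition $p_hf_h=q_hf_h+\text{(linear correction)}$, but the structure of the proof is the same.
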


\begin{proof}
The bound for $q_h f_h$ follows readily from
$$
\| q_h f_h \|_2^2 = \sum_m \int_{mh}^{(m+1) h} |f_h(mh)|^2 \, dx = h \sum_m |f_h(x_m)|^2 = \| f_h \|_{L^2_h}^2 .
$$
To prove the claimed bounds for $p_h f_h$, we argue as follows. As in the proof of Lemma \ref{prop:unifH2}, we can use interpolation of norms to conclude that it suffices to prove the bounds for $\sigma=0$ and $\sigma=1$, i.\,e.,
\begin{equation} \label{ineq:sobo}
\| p_h f_h \|_2 \leq C \| f_h \|_{L^2_h}, \quad \| p_h f_h \|_{H^1} \leq C \| f_h \|_{H^1_h} ,
\end{equation}
with $C >0$ independent of $h> 0$ and $f$. Indeed, we note that
$$
p_h f_h (x) = q_h f_h(x) + \sum_m (D^+_h f_h)(x_m) \1_{[x_m, x_{m+1})}(x) (x-x_m) .
$$ 
Hence, using that $\| q_h f_h \| \leq \| f_h\|_{L^2_h}$, we find that
$$
\| p_h f_h \|_2 \leq \| q_h f_h \|_2 + A \leq \| f_h \|_{L^2_h} + A,
$$
where
\begin{align*}
A^2 & = \sum_{m} \int_{x_m}^{x_{m+1}} | D^+_h f_h(x_m) |^2 (x-x_m)^2 \, dx= \sum_m \frac{h^3}{3} | D^+_h f_h(x_m)|^2 \\
& = \frac{h}{3} \sum_m \left | f_h(x_{m+1}) - f_h(x_m) \right |^2 \leq \frac{2 h}{3} \sum_m |f_h(x_m)|^2 = \frac{2}{3} \| f_h \|_{L^2_h}^2.
\end{align*}
Thus we obtain that $\| p_h f_h \|_2 \leq C \| f_h \|_{L^2_h}$, which proves the first bound in \eqref{ineq:sobo}. 

To show the second bound in \eqref{ineq:sobo} we argue as follows. From \cite[Chapter VI]{L}, we recall that 
$$\frac{d}{dx} p_h u_h= q_h(D^+_h u_h),$$
 where $q_hf_h$ is the piecewise constant interpolation of $f_h$ defined  in \eqref{def:qh}. Using the previous bounds, we have
\begin{align*}
\| p_h f_h \|_{H^1}  & \leq \| p_h f_h \|_{L^2} + \| q_h(D^+_h f_h) \|_{L^2}  \\ & \leq C ( \| f_h \|_{L^2_h} + \| D^+_h f_h \|_{L^2_h} ) \leq C \| f_h \|_{\tilde{H}^1_h} \leq C \| f_h \|_{H^1_h} , \nonumber
\end{align*}
where we used Lemma \ref{lem:hfull} in the last inequality. This completes the proof of the second bound in \eqref{ineq:sobo}, and hence Lemma \ref{lem:interbound} is proven. \end{proof}

We conclude this subsection with a technical fact that will be used below.

\begin{lemma} \label{lem:convph}
For any $f \in L^2(\R)$ and $g \in H^1(\R)$, we have
$$
\mbox{$\| p_h f_h -f \|_2 \to 0$ and $\| p_h g_h - g\|_{H^1} \to 0$ as $h \to 0^+$}.
$$
Moreover, if $v \in H^{\sigma}(\R)$ for some $0 < \sigma < 1$, then $p_h v_h \weakto v$ weakly in $H^\sigma(\R)$ as $h \to 0^+$. 
\end{lemma}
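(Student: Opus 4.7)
The plan is to prove the three claims in order, in each case reducing to a smooth-function computation and then invoking density via the uniform bound $\|p_h f_h\|_{H^\sigma} \leq C\|f\|_{H^\sigma}$ for $0 \leq \sigma \leq 1$, which follows by chaining Lemma \ref{prop:unifH2} and Lemma \ref{lem:interbound}. For the strong $L^2$ convergence, I would first verify the claim on the dense subspace $C_c^1(\R)$ by a direct pointwise estimate: on each interval $[x_m, x_{m+1})$ one has
\begin{equation*}
|p_h f_h(x) - f(x)| \leq |f_h(x_m) - f(x)| + h\,|D_h^+ f_h(x_m)| \leq 2h\|f'\|_{L^\infty},
\end{equation*}
and since $f$ has compact support the estimate is nontrivial only on $O(h^{-1})$ intervals, yielding $\|p_h f_h - f\|_2 \to 0$. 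A standard $3\eps$-approximation combined with the uniform $L^2$ bound then extends the convergence to every $f \in L^2(\R)$.

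For the $H^1$ claim, I would use the identity $(p_h g_h)' = q_h(D_h^+ g_h)$ recalled in the proof of Lemma \ref{lem:interbound}, which reduces $H^1$-convergence of $p_h g_h$ to $g$ to $L^2$-convergence of $q_h(D_h^+ g_h)$ to $g'$. For $g \in C_c^2(\R)$, Taylor's theorem yields $D_h^+ g_h(x_m) = g'(x_m) + O(h)$ uniformly in $m$ over the support, so the piecewise constant interpolant $q_h(D_h^+ g_h)$ converges uniformly on the relevant compact set to $g'$ and hence in $L^2(\R)$. Density of $C_c^2(\R)$ in $H^1(\R)$ combined with the uniform bound $\|p_h g_h\|_{H^1} \leq C\|g\|_{H^1}$ extends the conclusion to every $g \in H^1(\R)$.

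For the weak convergence in $H^\sigma(\R)$ with $0 < \sigma < 1$, I would exploit reflexivity. The uniform bound $\|p_h v_h\|_{H^\sigma} \leq C\|v\|_{H^\sigma}$ together with Banach--Alaoglu shows that any sequence $h_n \to 0^+$ admits a subsequence along which $p_{h_{n_k}} v_{h_{n_k}} \weakto w$ in $H^\sigma(\R)$. This weak convergence passes to $L^2(\R)$, but the strong $L^2$-convergence of $p_h v_h$ to $v$ (from the first part applied to $v \in L^2(\R)$) forces $w = v$. Since every subsequence has a sub-subsequence converging weakly to the same limit $v$, the full family converges weakly to $v$ in $H^\sigma(\R)$. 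None of the steps is a genuine obstacle; the mildly delicate point is the Taylor-based reduction in the $H^1$ case, where one must track the $O(h)$ error carefully on the compact support before invoking density.
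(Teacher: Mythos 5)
Your argument is correct. For the weak-convergence claim you follow essentially the same route as the paper: the uniform bound $\|p_h v_h\|_{H^\sigma}\leq C\|v\|_{H^\sigma}$ from Lemmas \ref{prop:unifH2} and \ref{lem:interbound}, Banach--Alaoglu along subsequences, identification of the weak limit with $v$ via the strong $L^2$-convergence, and the subsequence principle to upgrade to convergence of the full family. The difference lies in the two strong-convergence claims: the paper simply cites Ladyzhenskaya \cite[Chapter VI, Lemma 4.1]{L} for both, whereas you supply a self-contained proof by verifying the statements on the dense subspaces $C^1_c(\R)$ and $C^2_c(\R)$ via pointwise Taylor estimates (using $|f_h(x_m)-f(x)|\leq h\|f'\|_\infty$ and $|D_h^+f_h(x_m)|\leq\|f'\|_\infty$ for the $L^2$ case, and the identity $(p_h g_h)'=q_h(D_h^+ g_h)$ together with $D_h^+ g_h(x_m)=g'(x_m)+O(h)$ for the $H^1$ case) and then extending by a $3\eps$-argument using the same uniform operator bounds. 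Your estimates check out — in particular the count of $O(h^{-1})$ relevant intervals each contributing $O(h^3)$ to the squared norm gives $O(h^2)\to 0$ — so your version buys a proof that is independent of the external reference, at the cost of a page of elementary computation; the paper's citation keeps the exposition short. One cosmetic remark: in the density step you should note that $f\mapsto p_h f_h$ is linear, so the uniform bound applies directly to the difference $p_h(f-g)_h$; you use this implicitly and it is fine.
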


\begin{proof} For the strong convergence results in $L^2(\R)$ and $H^1(\R)$, we refer to \cite[Chapter VI, Lemma 4.1]{L}. 

The weak convergence result can be seen as follows. Suppose that $v \in H^{\sigma}(\R)$ for some $0 < \sigma < 1$. Then $p_h v_h \to v$ strongly in $L^2(\R)$ as $h \to 0^+$. Let $h_n \to 0^+$ be some sequence. By Lemma \ref{prop:unifH2} and \ref{lem:interbound}, we have that $\sup_{n \geq 1} \|  p_{h_n} v_{h_n} \|_{H^\sigma} < +\infty$. Hence, after passing to a subsequence if necessary, we can assume that $p_{h_n} v_{h_n} \weakto w$ weakly in $H^\sigma(\R)$ as $n \to \infty$ for some $w \in H^\sigma(\R)$. But since $p_h v_h \to v$ strongly in $L^2(\R)$ as $h \to 0^+$, we conclude by a density argument that $w \equiv v$ holds. Hence $p_h v_h \weakto v$ weakly in $H^\sigma(\R)$ as $h \to 0^+$.   \end{proof}

\subsection{Strong Convergence for $\Ldif^{J}_h$ as $h \to 0^+$}

In this subsection, we study the ``continuum limit'' of the operator $\Ldif^{J}_h$ defined in \eqref{def:Jop}. To this end, we extend the action of $\Ldif^{J}_h$ to functions $\phi \in L^2(\R)$ by setting
\begin{equation}\label{Lj}
 (\Ldif^{J}_h \phi)(x) := \frac{1}{\beta(h)} \sum_{n \neq 0} J_{|n|} \big [  \phi(x) - \phi(x-nh) \big ]  
\end{equation}
where $\beta(h)$ is given in \eqref{eq:beta}.  

We readily check that $\| \Ldif^{J}_h \phi \|_2 \leq  C \beta(h)^{-1} \sum_{n =1}^\infty J_n \| \phi \|_2 \leq C \beta(h)^{-1} \| \phi \|_2$, and thus the operator $\Ldif^{J}_h$ is bounded on $L^2(\R)$. Moreover, we easily check that $( \Ldif^{J}_h)^* = \Ldif^{J}_h$ is self-adjoint. Furthermore, recalling the averaged discretization of $v \in L^2(\R)$, which is $v_h(x_m) = h^{-1} \int_{x_m}^{x_{m+1}} v(x) \, dx,$ we see that $\| v_h \|_{L^2_h} \leq \| v \|_2$ by Lemma \ref{prop:unifH2}. Moreover, a simple calculation shows that 
\begin{equation} \label{eq:Lcommute}
\left  (\Ldif^{J}_h v \right )_h (x_m) = \left (\Ldif^{J}_h v_h \right )(x_m) .
\end{equation} 
This identity says that we can first let $\Ldif^{J}_h$ act on $v \in L^2(\R)$ and then discretize, or equivalently first discretize $v$ and let the discrete operator $\Ldif^{J}_h$ act on $v_h \in L^2_h$. This fact will be needed further below when we discuss the continuum limit.

We conclude this section with the following convergence result. 

\begin{lemma} \label{lem:conv}
Let $0 < s \leq +\infty$ and suppose that $J = (J_n )_{n=1}^\infty \in \mathcal{K}_s$ with $J \not \equiv 0$. Define $\alpha$ as in \eqref{alpha}. Then, for every $\phi \in C^\infty_0(\R)$, 
$$
\mbox{$\Ldif^{J}_h \phi \to c (-\Delta)^{\alpha} \phi$ strongly in $L^2(\R)$ as $h \to 0^+$} .
$$
Here $c > 0$ is some constant that only depends on $s$ and $J$. 
\end{lemma}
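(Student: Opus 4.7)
The plan is to transplant the entire problem into Fourier space. Since $\phi \in C^\infty_0(\R)$, the Fourier transform $\hat{\phi}$ is a Schwartz function, and this rapid decay will absorb the growth of the multipliers one encounters. Using the translation law $(\phi(\cdot-nh))^{\wedge}(k) = e^{-inhk}\hat{\phi}(k)$, a direct computation from \eqref{Lj} gives
$$
(\mathcal{L}_h^J \phi)^{\wedge}(k) = m_h(k)\,\hat{\phi}(k), \qquad m_h(k) := \frac{\omega(hk)}{\beta(h)},
$$
where $\omega(\xi) = 2\sum_{n=1}^\infty J_n(1-\cos(n\xi))$ is exactly the symbol already encountered in the proofs of Lemmas \ref{lem:embed} and \ref{lem:embedlog}. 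Since $c(-\Delta)^\alpha$ has Fourier symbol $c|k|^{2\alpha}$, by Plancherel the lemma reduces to showing that $m_h\hat{\phi} \to c|\cdot|^{2\alpha}\hat{\phi}$ in $L^2(\R)$.

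First I would establish pointwise convergence $m_h(k) \to c|k|^{2\alpha}$ for each fixed $k \in \R$, which is precisely what the asymptotic analysis in Lemma \ref{lem:asymp} delivers. For $\tfrac 1 2 < s < 1$, the asymptotic $\omega(\xi) = c|\xi|^{2s}(1+o(1))$ matches $\beta(h) = h^{2s}$; for $s>1$, the asymptotic $\omega(\xi) = c|\xi|^2(1+o(1))$ matches $\beta(h) = h^2$; and for $s=1$, the asymptotic $\omega(\xi) = c|\xi|^2(-\log|\xi|)(1+o(1))$ matches $\beta(h) = -(\log h)h^2$, because the ratio $(-\log|hk|)/(-\log h) = 1 + (\log(1/|k|))/(-\log h) \to 1$ as $h \to 0^+$ for every fixed $k \neq 0$. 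The prefactor $c > 0$ is read off from Lemma \ref{lem:asymp} and depends only on $J$ and $s$.

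Next I would produce an $h$-uniform majorant so that dominated convergence applies. Splitting at $|hk|=k_0$, the estimates already used in Lemmas \ref{lem:embed} and \ref{lem:embedlog} give $\omega(hk) \leq C|hk|^{2\alpha}$ for $|hk|\leq k_0$ (with an extra factor $-\log|hk|$ in the case $s=1$), while the global bound $\omega(\xi) \leq 2\sum_{n\geq 1} J_n < \infty$ handles all other $\xi$. In the tail regime $|hk| \geq k_0$ one has $|k|^{2\alpha} \geq (k_0/h)^{2\alpha}$, which absorbs the $1/\beta(h)$ factor up to constants. Altogether this yields
$$
m_h(k) \leq C\bigl(1 + |\log|k||\bigr)|k|^{2\alpha} \qquad \text{uniformly in } h \in (0,h_0],
$$
so the Schwartz decay of $\hat{\phi}$ makes $\bigl|m_h(k) - c|k|^{2\alpha}\bigr|^2|\hat{\phi}(k)|^2$ integrable with an $h$-independent majorant. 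Dominated convergence then closes the argument:
$$
\|\mathcal{L}_h^J\phi - c(-\Delta)^\alpha\phi\|_2^2 = \int_\R \bigl|m_h(k) - c|k|^{2\alpha}\bigr|^2|\hat{\phi}(k)|^2\,dk \xrightarrow{h \to 0^+} 0.
$$

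The main obstacle is the borderline case $s=1$: the matching logarithmic corrections in $\beta(h)$ and in the small-$\xi$ asymptotic of $\omega$ must be tracked with care so that no hidden cancellation spoils the pointwise limit, and the uniform majorant then carries a mild $|\log|k||$ factor near $k=0$. This is harmless only because $\hat{\phi}$ is bounded at $k=0$ and $|k|^{4\alpha}(\log|k|)^2$ is locally integrable, so the required absorption still goes through.
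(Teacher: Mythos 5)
Your proposal is correct and follows essentially the same route as the paper: pass to the Fourier side so that $\mathcal{L}_h^J$ becomes multiplication by $\omega(hk)/\beta(h)$, get pointwise convergence of the multiplier from Lemma \ref{lem:asymp} (including the matching of the $-\log h$ normalization with the $-\log|hk|$ asymptotic when $s=1$), and close with an $h$-uniform majorant of the form $C(1+|\log|k||)|k|^{2\alpha}|\hat{\phi}(k)|$ plus dominated convergence. The paper's proof is structured identically, splitting at $|hk|=z_0$ for the uniform bound exactly as you do.
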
 

\begin{remark} \em
Putting it differently, this lemma says that the family of bounded self-adjoint operators $\Ldif^{J}_h$ converges strongly  as $h \to 0^+$ to the unbounded self-adjoint operator $c (-\Delta)^{\alpha}$ acting on $L^2(\R)$ with dense domain $C^\infty_0(\R)$.
\end{remark}

\begin{proof}
By taking the Fourier transform $\mathcal{F}$ on $\R$, we find that
$$
\mathcal{F}(\Ldif^{J}_h \phi)(k) =  \frac{ \omega(hk)}{  \beta (h) |k|^{2 \alpha}}  |k|^{2 \alpha} \hat{\phi}(k), \quad \mbox{for a.\,e.~$k \in \R$},
$$
where $\hat{\phi}=\mathcal{F}(\phi)$ and $\omega(z) := 2 \sum_{n=1}^\infty J_n [ 1- \cos (nz) ]$ with $z \in \R$. Let $k \in \R$ with $k \neq 0$ be fixed. If $s \neq 1$, then $\beta(h) = h^{2\alpha}$ and hence, by Lemma \ref{lem:asymp}, 
$$
\lim_{h \to 0} \frac{\omega(hk)}{\beta(h) |k|^{2 \alpha}} = \lim_{z \to 0} \frac{\omega(z)}{|z|^{2\alpha}} = c,
$$ 
for some constant $c >0$. If $s=1$, we have $\beta(h) = -\log(h) h^2$. Letting $z= hk$ with $k \neq 0$ fixed, we conclude in this case, by using Lemma \ref{lem:asymp} again, 
$$
\lim_{h \to 0} \frac{\omega(hk)}{\beta(h) |k|^2} = - \lim_{z \to 0} \frac{ \omega(z) }{ \log(z/k) |z|^2} = c,
$$
for some constant $c > 0$. In summary, we conclude that for any $0 < s \leq +\infty$ the pointwise convergence
$$
\mbox{$\mathcal{F}(\Ldif^{J}_h \phi)(k) \to c |k|^{2 \alpha} \hat{\phi}(k)$ for a.\,e.~$k \in \R$ as $h \to 0^+$.}
$$
To turn this into strong convergence in $L^2(\R)$, we derive bounds uniform in $h>0$ and then use the dominated convergence theorem. First, we assume that $s \neq 1$ and hence $\beta(h) = h^{2 \alpha}$. In this case, we have, by Lemma \ref{lem:asymp},
$$
\left | \frac{\omega(hk)}{|hk|^{2 \alpha}} \right | \leq 2 c ,\quad \mbox{for $|hk| \leq z_0$},
$$
where $z_0 > 0$ is some small constant depending only on $J$. On the other hand, we have the upper bound $|\omega(hk)| \leq 4 \sum_{n=1}^\infty J_n \leq A$ for some $A > 0$. Therefore,
$$
\left | \frac{\omega(hk)}{|hk|^{2 \alpha}} \right | \leq \frac{A}{|z_0|^{\beta}} \leq C, \quad \mbox{for $|hk| \geq z_0$}.
$$
Combining these bounds, we conclude that $| \frac{\omega(hk)}{\beta(h) |k|^{2\alpha}} | \leq C$ for all $h > 0$ and $k \in \R$. Hence,
$$
| \mathcal{F} (\Ldif^{J}_h \phi)(k)| \leq C |k|^{2 \alpha} |\hat{\phi}(k)| , \quad \mbox{for a.\,e.~$k \in \R$},
$$
with some constant $C > 0$ depending only on $s$ and $J$, provided that $s \neq 1$ holds. Note that $\int |k|^{2 \alpha} |\hat{\phi}(k)|^2 < +\infty$, since $\phi$ belongs to $C^\infty_0(\R)$. By the dominated convergence theorem, we deduce that
$$
 \int_\R \Big | \mathcal{F}(\Ldif^{J}_h  \phi)(k) - c  |k|^{2 \alpha} \hat{\phi}(k) \Big |^2 \, d k \to 0 \quad \mbox{as} \quad h \to 0^+,
$$
which completes the proof of Lemma \ref{lem:conv}, provided that $s \neq 1$ holds.

To complete the proof for the special case $s=1$, we note that $\beta(h) = (- \log h) h^2$. Since $|\frac{\omega(z)}{\log (z) z^2} | \leq C$ for $|z| \leq z_0$ with some constant $z_0 > 0$ by Lemma \ref{lem:asymp}, we deduce, for $|hk| \leq z_0$, that
$$
\left | \frac{\omega(hk)}{\beta(h) |k|^2 } \right | = \left | \frac{ \omega(hk) \log(hk)}{\log (h) \log (hk) |h^2 k^2|} \right | \leq C \frac{\left | \log(hk) \right | }{\left | \log (h) \right | } \leq \frac{C}{\left |\log h_0 \right|} \left (1 + \left |\log k \right | \right ) ,
$$
where we assume without loss of generality that $0 < h \leq h_0 < 1$. On the other hand, using that $|\omega(hk)| \leq 4 \sum_n J_n \leq C$, we find that
$$
\left | \frac{\omega(hk)}{\beta(h) |k|^2 } \right | \leq \frac{C}{\left | \log(h) h^2 k^2 \right | } \leq \frac{C}{\left | \log (h_0) z_0^2 \right |}, \quad \mbox{for $|h k| \geq z_0$},
$$
and $0 < h \leq h_0 < 1$. In summary, we have shown in the case $s=1$ that 
$$
| \mathcal{F} (\Ldif^{J}_h \phi)(k)| \leq C (1 +  \left | \log k \right |) |k|^{2} |\hat{\phi}(k)| , \quad \mbox{for a.\,e.~$k \in \R$},
$$
with some constant $C > 0$ depending only on $J$ and $h_0 > 0$. We easily check that the integral $\int (1 + \left | \log k \right |)^2 |k|^4 |\hat{\phi}(k)|^2 \, dk$ is finite, since $\phi \in C^\infty_0(\R)$. By dominated convergence, we deduce that Lemma \ref{lem:conv} also holds when $s=1$. \end{proof}

\section{Discrete Evolution Problem and A-Priori Bounds}\label{discretevolution}

\label{sec:evolution}

Suppose that $J = ( J_n )_{n=1}^\infty \in \mathcal{K}_s$ for some $0 < s \leq +\infty$. Let $\Ldif^{J}_h$ be the discrete operator defined \eqref{Lj} above. We consider the initial-value problem 
\begin{equation} \label{eq:d_ivp}
\left \{ \begin{array}{l} \displaystyle i \frac{d}{dt} u_h(t,x_m) =  (\Ldif^{J}_h u_h)(t,x_m) \pm |u(t,x_m)|^2 u(t,x_m) , \\[1ex]
u_h(0,x_m) = v_h(x_m) \quad \mbox{with $m \in \Z$}. \end{array} \right . 
\end{equation}
 We record the following simple fact.
\begin{prop}[GWP in $L^2_h$] \label{prop:gwp_discrete}
The initial-value problem \eqref{eq:d_ivp} is globally well-posed in $L^2_h$. That is, for every initial datum $v_h \in L^2_h$, there exists a unique global classical solution $u_h \in C^1([0,\infty); L^2_h)$ that solves \eqref{eq:d_ivp}. 

Moreover, we have conservation of energy and $L^2_h$-mass given by
$$
E_h(u_h) = \frac{1}{2} (u_h, \Ldif^{J}_h u_h)_{L^2_h} \pm \frac{1}{4} \| u_h \|_{L^4_h}^4,
$$
$$
N(u_h) = (u_h, u_h)_{L^2_h} .
$$
\end{prop}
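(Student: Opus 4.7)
The plan is to treat \eqref{eq:d_ivp} as a Banach-space ODE on $L^2_h$ and apply a standard fixed-point argument. First, I would rewrite the problem as $\frac{d}{dt} u_h = F(u_h)$ with $F(u_h) := -i \Ldif^{J}_h u_h \mp i |u_h|^2 u_h$ and show that $F : L^2_h \to L^2_h$ is locally Lipschitz. The linear part is globally Lipschitz since $\Ldif^{J}_h$ is bounded on $L^2_h$ with $\| \Ldif^{J}_h u_h \|_{L^2_h} \leq C \beta(h)^{-1} \| u_h \|_{L^2_h}$, as already remarked after \eqref{eq:Jgood}. For the cubic part, the elementary embedding $\| u_h \|_{L^\infty_h} \leq h^{-1/2} \| u_h \|_{L^2_h}$ (each summand in the $L^2_h$-norm carries weight $h$) gives
$$
\| |u_h|^2 u_h - |v_h|^2 v_h \|_{L^2_h} \leq C \big ( \| u_h \|_{L^\infty_h}^2 + \| v_h \|_{L^\infty_h}^2 \big ) \| u_h - v_h \|_{L^2_h} \leq C_h R^2 \| u_h - v_h \|_{L^2_h}
$$
on the ball $\{ \| u_h \|_{L^2_h}, \| v_h \|_{L^2_h} \leq R \}$. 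By the Picard--Lindel\"of theorem, there is a unique maximal classical solution $u_h \in C^1([0,T_{\max}); L^2_h)$.

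Next I would verify the two conservation laws by direct computation on the classical solution. Pairing \eqref{eq:d_ivp} with $u_h$ in $L^2_h$ and taking imaginary parts, using that $\Ldif^{J}_h$ is self-adjoint (so $(u_h, \Ldif^{J}_h u_h)_{L^2_h} \in \R$) and that $\sum_m |u_h(x_m)|^4$ is also real, the cross-terms cancel and yield $\frac{d}{dt} N(u_h) = 2 \,\mathrm{Im}\,(u_h, \partial_t u_h)_{L^2_h} = 0$. For the energy, pairing with $\partial_t u_h$ and taking real parts, together with self-adjointness of $\Ldif^{J}_h$ and the identity $\mathrm{Re} ( |u_h|^2 u_h \, \overline{\partial_t u_h} ) = \frac{1}{4} \partial_t |u_h|^4$, gives $\frac{d}{dt} E_h(u_h) = 0$.

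Finally, global existence follows from the blow-up alternative of abstract Banach-space ODEs: the maximal time $T_{\max}$ is either $+\infty$ or else $\| u_h(t) \|_{L^2_h} \to \infty$ as $t \nearrow T_{\max}$. Conservation of $N(u_h)$ furnishes the a priori bound $\| u_h(t) \|_{L^2_h} = \| v_h \|_{L^2_h}$ for all $t \in [0, T_{\max})$, which excludes the second alternative. Hence $T_{\max} = +\infty$ and $u_h \in C^1([0,\infty); L^2_h)$, completing the proof.

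I do not expect a real obstacle in this argument: the whole problem is, for each fixed $h > 0$, a Lipschitz ODE on the Banach space $L^2_h$ whose nonlinearity is tamed by the (very cheap) embedding $L^2_h \hookrightarrow L^\infty_h$. The only point worth flagging is that the Lipschitz and operator-norm constants depend badly on $h$ (through $\beta(h)^{-1}$ and $h^{-1/2}$), but this is irrelevant here since the delicate $h$-uniform estimates needed for the continuum limit are handled separately in Section~\ref{sec:prelim}.
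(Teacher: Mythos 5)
Your argument is correct and essentially the same as the paper's: the paper likewise obtains local well-posedness by a fixed-point/Picard argument on $L^2_h$ (phrased via the Duhamel formula with the unitary group $e^{-it\Ldif^{J}_h}$ and the local Lipschitz property of $u_h \mapsto |u_h|^2 u_h$ coming from the trivial embedding of $\ell^2$ into $\ell^\infty$), extends globally via $L^2_h$-conservation, and verifies the conservation laws by a direct computation. The only cosmetic difference is that you absorb the bounded linear part into the Lipschitz vector field and invoke Picard--Lindel\"of directly rather than writing the integral equation with the free propagator; this changes nothing of substance.
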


\begin{proof}
This  follows from standard arguments. Indeed, we consider the integral formula
$$
u_h(t) = e^{-i t \Ldif^{J}_h} v \mp i \int_0^t e^{-i(t-s) \Ldif_h^{J}} |u_h(s)|^2 u_h(s) \, ds.
$$
Note that $\{ e^{-it \Ldif^{J}_h} \}_{t \in \R}$ is strongly continuous unitary one-parameter group  on $L^2_h$, since $\Ldif^{J}_h$ is self-adjoint. Moreover, note that $u_h \mapsto |u_h|^2 u_h$ is locally Lipschitz on $L^2_h$ thanks to the embedding $\| \cdot \|_{\ell_\infty} \leq \| \cdot \|_{\ell_2}$ for sequences. A simple fixed point argument now yields local well-posedness in $L^2_h$, where the local time of existence only depends on $\| u_h \|_{L^2_h}$. Global extension then follows from $L^2_h$-conservation. Finally, note that any  solution $u_h \in C^0(\R; L^2_h)$ is automatically a strong classical solution, since $\frac{d}{dt} u_h \in L^2_h$ by the equation itself, since $\Ldif^{J}_h u_h \in L^2_h$ (because $\Ldif^{J}_h$ is a bounded operator on $L^2_h$) and $|u_h|^2 u_h \in L^2_h$ as previously remarked.  

The proof of the conservation laws follows from a simple calculation.\end{proof}

\medskip
Next, we derive the following a priori bounds for solutions $u_h$ to \eqref{eq:d_ivp}. 

\begin{lemma}[A priori bounds] \label{lem:bounds}
Let $u_h \in C^1([0,\infty); L^2_h)$ solve \eqref{eq:d_ivp}. Suppose that $J = (J_n)_{n=1}^\infty$ belongs to $\mathcal{K}_s$ with some $\frac{1}{2} < s \leq +\infty$ and assume that $ J_1 >0$. Define $\sigma=s$ if $0 < s < 1$, $\sigma=1-\eps$ for some $\eps > 0$ small if $s=1$, and $\sigma = 1$ if $s > 1$. Then if $s>0 $ and $s\ne 1$ we have the uniform a-priori bound
$$
\sup_{t \geq 0} \| u_h(t, \cdot) \|_{H^{\sigma}_h} \leq C ( \| v_h \|_{H^{\sigma}_h} ) .
$$ 
If $s=1$, for any $0 < T < \infty$, we have the bound
$$
\sup_{t \in [0,T]} \| u_h(t, \cdot) \|_{ H^1_h} \leq C ( T, \| v_h \|_{ H^1_h} ) .
$$
Here the constants $C > 0$ are independent of $0 < h \leq h_0$ with $h_0 > 0$ sufficiently small.
\end{lemma}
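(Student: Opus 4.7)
The plan is to combine the two conservation laws from Proposition \ref{prop:gwp_discrete} with the norm equivalences of Lemmas \ref{lem:embed}--\ref{lem:embedlog}, a fractional Gagliardo--Nirenberg absorption argument using Lemma \ref{lem:GN}, and, for the borderline case $s=1$, a Gronwall bootstrap resting on the uniform Sobolev embedding of Lemma \ref{lem:sobo}. The key observation is that conservation of energy controls the intrinsic ``energy norm''
\[
\|u_h\|_{H^J_h}^2 = \|u_h\|_{L^2_h}^2 + (u_h,\Ldif^{J}_h u_h)_{L^2_h},
\]
and the norm equivalences then translate this into a genuine (fractional) Sobolev bound.

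First I would rewrite the energy identity as
\[
\tfrac{1}{2}(u_h,\Ldif^{J}_h u_h)_{L^2_h} = E_h(v_h) \mp \tfrac{1}{4}\|u_h\|_{L^4_h}^4 .
\]
In the defocusing case ($+$ sign in the equation, $-$ above), this directly yields $\|u_h\|_{H^J_h}^2 \leq \|v_h\|_{L^2_h}^2 + 2E_h(v_h)$, and the right-hand side is controlled in terms of $\|v_h\|_{H^\alpha_h}$ (resp.\ $\|v_h\|_{H^1_h}$ when $s=1$) via the upper bound in Lemma \ref{lem:embed} (resp.\ Lemma \ref{lem:embedlog}) and Lemma \ref{lem:GN}. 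In the focusing case I instead apply Lemma \ref{lem:GN} with $\sigma=\alpha$ and some $\sigma_0\in(1/4,\alpha/2)$ — possible because $\alpha>1/2$ — to get
\[
\|u_h\|_{L^4_h}^4 \leq C\|u_h\|_{H^\alpha_h}^{\theta}\|u_h\|_{L^2_h}^{4-\theta},\qquad \theta=4\sigma_0/\alpha\in(1,2).
\]
Plugging this into the rearranged energy identity, using the lower bound $\|u_h\|_{H^\alpha_h}\leq A^{-1}\|u_h\|_{H^J_h}$ from Lemma \ref{lem:embed}, and absorbing the subcritical term $\|u_h\|_{H^\alpha_h}^\theta$ into the left-hand side via Young's inequality (legal because $\theta<2$), I obtain $\sup_{t\geq 0}\|u_h(t)\|_{H^\alpha_h}\leq C(\|v_h\|_{H^\alpha_h})$, which settles the case $s\neq 1$.

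For $s=1$ the lower bound in Lemma \ref{lem:embedlog} only provides the sub-endpoint estimate $\|u_h\|_{H^\sigma_h}\leq K\|u_h\|_{H^J_h}$ for any fixed $\sigma\in(1/2,1)$, so the argument above yields a uniform-in-time bound on $\|u_h\|_{H^\sigma_h}$ and hence, via Lemma \ref{lem:sobo}, a uniform-in-time bound on $\|u_h\|_{L^\infty_h}$. To upgrade this to the desired $H^1_h$ bound I would work with the equivalent norm $\|\cdot\|_{\tilde H^1_h}$ from Lemma \ref{lem:hfull} and differentiate $\|u_h(t)\|_{\tilde H^1_h}^2$ in time. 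Since $\Ldif^{J}_h$ is a real, self-adjoint Fourier multiplier that commutes with $D^+_h$, the linear contribution cancels; expanding the nonlinear contribution via the discrete Leibniz rule for $D^+_h$ and using the already-established uniform $L^\infty_h$ control gives
\[
\frac{d}{dt}\|u_h(t)\|_{\tilde H^1_h}^2 \leq C\|u_h(t)\|_{L^\infty_h}^2\|u_h(t)\|_{\tilde H^1_h}^2 \leq C'\|u_h(t)\|_{\tilde H^1_h}^2 .
\]
Gronwall and Lemma \ref{lem:hfull} then deliver the $T$-dependent bound.

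The main obstacle is precisely this borderline case $s=1$: the logarithmic factor $|\log h|$ in $\beta(h)$ destroys the two-sided equivalence between the energy norm and $\|\cdot\|_{H^1_h}$, so conservation of energy by itself cannot produce a uniform-in-time $H^1_h$ estimate — only an $H^\sigma_h$ estimate for $\sigma<1$. The Gronwall bootstrap works only because Lemma \ref{lem:sobo} requires $\sigma>1/2$, leaving room between $1/2$ and $1$ in which the sub-endpoint estimate delivers uniform $L^\infty_h$ control, and because $\Ldif^{J}_h$ is a Fourier multiplier and thus commutes with $D^+_h$, killing the linear term in the energy derivative.
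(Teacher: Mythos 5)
Your proposal is correct and follows essentially the same route as the paper: conservation of $E_h$ and $N_h$ combined with Lemmas \ref{lem:embed}/\ref{lem:embedlog} and the absorption via Lemma \ref{lem:GN} (possible since $4\sigma_0/\sigma<2$ for $\sigma>\frac12$, $\sigma_0>\frac14$) for the uniform-in-time bound, and a Gronwall argument using the commutation $D^+_h\Ldif^J_h=\Ldif^J_h D^+_h$, the discrete Leibniz rule, and the uniform $L^\infty_h$ control from Lemma \ref{lem:sobo} for the $\tilde H^1_h$ bound. The only (immaterial) difference is that you run the Gronwall step by differentiating $\|D^+_h u_h\|_{L^2_h}^2$ in time and using self-adjointness of $\Ldif^J_h$ to cancel the linear term, whereas the paper applies $D^+_h$ to Duhamel's formula and uses unitarity of $e^{-it\Ldif^J_h}$.
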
 

%\begin{remark}
%In fact, the control of $\| u_h(t, \cdot) \|_{\tilde{H}^1_h}$ on compact time intervals will only be needed to treat the borderline case when $s=1$ in the proof of Theorem \ref{thm:main} below.
%\end{remark}

\begin{proof}
The global a-priori bounds follow from conservation of $E_h(u_h)$ and $N_h(u_h)$ and Lemmas \ref{lem:GN} and \ref{lem:embed}. Indeed, in the defocusing case (with $+$ sign), we immediately obtain the a-priori bound 
$$
\| u_h(t, \cdot ) \|_{H^\sigma_h}^2 \leq C \| u_h(t,\cdot) \|_{H^{J}_h}^2 \leq C (E(v_h) + N(v_h)) \leq C ( \| v_h \|_{H^{\sigma}_h} ),
$$
where we used Lemmas \ref{lem:GN} and \ref{lem:embed}. In the focusing case, we use Lemmas \ref{lem:GN} and \ref{lem:embed} and $(u_h, u_h)_{L^2_h} = \mbox{const}$., to deduce that
$$
E(u_h) \geq  \frac{1}{2}  \| u_h \|_{H^{\sigma}_h}^2 - C \| u_h \|_{H^{\sigma}_h}^{4\sigma_0/\sigma}  
$$
for any fixed $1 \geq \sigma_0 > \frac 1 4$. Since $\sigma > \frac 1 2$ by assumption, we can ensure that $\frac {4\sigma_0} \sigma < 2$ holds. From this bound we deduce that $\| u_h \|_{H^{\sigma}_h} \to +\infty$ implies that $E(u_h) \to +\infty$. Hence, by energy conservation and $E(u_h) <+\infty$, we can deduce the a-priori bound
$$
\sup_{t \geq 0} \| u_h(t,\cdot) \|_{H^{\sigma}_h} \leq C ( \| v_h \|_{H^{\sigma}_h} ) .
$$

Now we turn to the a-priori bound for the integer discrete (classical) Sobolev norm $\| u_h(t,\cdot) \|_{H^1_h}$ defined in \eqref{classicaldiscrete}. By Lemma \ref{lem:hfull}, we can use the equivalent norm $\| \cdot \|_{\tilde H^1_h}$ instead of $\| \cdot \|_{H^1_h}$.

By Duhamel's formula,
$$
u_h(t) = e^{-it \Ldif^{J}_h} v_h \mp i \int_0^t e^{-i(t-s) \Ldif^{J}_h} |u_h(s)|^2 u_h(s) \, ds,
$$
where we write $u_h(t)$ to denote $u_h(t,x_m)$ with $m \in \Z$, etc. Taking the discrete right-hand derivative $D^+_h$, we obtain that
\begin{align*}
\| D^+_h u_h(t) \|_{L^2_h} & \leq \| D^{+}_h e^{-it \Ldif^{J}_h } v_h \|_{L^2_h} + \int_0^t \| D^+_h \{ e^{-i(t-s) \Ldif^{J}_h} |u_h(s)|^2 u_h(s) \} \|_{L^2_h} \, ds.
\end{align*}
We have that $D^+_h e^{-it \Ldif^{J, \beta}_h} = e^{-it \Ldif^{J}_h} D^+_h$, since $\Ldif^{J}_h$ commutes with translations on $h\Z$. Moreover, recall that $e^{-it \Ldif^{J}_h}$ is unitary on $L^2_h$. Hence,
$$
\| D^+_h u_h(t) \|_{L^2_h} \leq \| D^+_h v_h \|_{L^2_h} + \int_0^t \| D^+_h \{ |u_h(s)|^2 u_h(s)\} \|_{L^2_h} \, ds. 
$$
By the Leibniz product formula for $D^+_h$,
$$ 
D^+_h (u_h v_h) (s) =  u_h (s) D^+_h v_h (s) + D^+_h u_h (s) v_h (s+h),
$$
 and the uniform embedding Lemma \ref{lem:sobo} for $\sigma > \frac{1}{2}$, we deduce that
$$
\| D^{+}_h \{ |u_h(s)|^2 u_h(s)\} \|_{L^2_h}  \leq C \| u_h \|_{L^\infty_h}^2 \| D^+_h u_h(s) \|_{L^2_h} \leq C(\| v_h\|_{H^{\sigma}_h}) \| D^+_h u_h(s) \|_{L^2_h},
$$
where we also used Lemma \ref{lem:embed} and the a-priori bound on $\| u_v \|_{H^{\sigma}_h}$ derived above. In summary, we see that
$$
 \| D^+_h u_h(t) \|_{L^2_h} \leq \| D^+_h v_h \|_{L^2_h} + C  \int_0^t \| D^{+}_h u_h(s) \|_{L^2_h} \, ds. 
$$
By Gronwall's estimate, this implies that
$$
 \sup_{t \in [0,T]}  \| D^+_h u_h(t) \|_{L^2_h} \leq C(T, \| v_h \|_{\tilde H^1_h}, \| v_h \|_{H^{\sigma}_h} ),
$$
for any fixed $T >0$. Noting that $\|v_h \|_{H^{\sigma}_h} \leq C \| v_h \|_{H^1_h}$ by Lemma \ref{lem:hfull}, we complete the proof of the desired a-priori bound for $\| u_h \|_{\tilde{H}^1_h}$. The proof of Lemma \ref{lem:bounds} is now complete.
\end{proof}

\section{Proof of Theorem \ref{thm:main}}

\label{sec:proof}

%{\bf I will focus on the case $\frac{1}{ 2}  < s <1$ first... The case $s > 1$ follows similarly. To be done later. Again, the borderline case $s=1$ (yielding $\log$-terms) is technically a bit cumbersome... So I will omit it for the time being.}

For the reader's convenience, we first recall the hypotheses and definitions from Theorem \ref{thm:main}. We suppose that $J = (J_n) \in \mathcal{K}_s$ for some $\frac 1 2 < s \leq +\infty$ and $J_1 > 0$. Let 
$$
\alpha := \left \{ \begin{array}{ll} s & \quad \mbox{if $\frac 1 2 < s < 1$,} \\ 1 & \quad \mbox{if $s \geq 1$}. \end{array} \right .
$$
Let $\beta(h)$ as in Theorem \ref{thm:main}. Assume that $0< h_0 < 1$ is sufficiently small and consider the lattice $h \Z$ with $0 < h \leq h_0$. Suppose that $v \in H^\alpha(\R)$ and let $v_h=v_h(x_m)$ be its discretization defined in \eqref{eq:discretize}. Note that by Lemma \ref{lem:convph} we have $p_h v_h \weakto v$ weakly in $H^\alpha(\R)$ as $h \to 0^+$, where $p_h$ is the piecewise linear interpolation defined in \eqref{def:ph} above. Finally, let $u_h= u_h(t,x_m)$ denote the corresponding global solution to the discrete evolution problem \eqref{eq:discivp} with initial datum $v_h \in L^2_h$.  

Let $T > 0$ be a fixed (but otherwise arbitrary) time. As a first step in the proof of Theorem \ref{thm:main}, we derive the following uniform bounds for $p_h u_h(t)$ and $\partial_t p_h u_h(t)$. 

\subsubsection*{\bf Bounds for $p_h u_h(t)$ in $H^\alpha(\R)$} Let $M:=\sup_{0 < h \leq h_0} \| p_h v_h \|_{H^\alpha}$. Note that $M < +\infty$ for $h_0 > 0$ sufficiently small, since $p_h v_h \weakto v$ weakly in $H^\alpha(\R)$ as $h \to 0^+$ as mentioned above. Now, we claim that
\begin{equation} \label{ineq:discbound}
\sup_{t \in [0,T]} \| p_h u_h(t) \|_{H^\alpha} \leq C,
\end{equation}
where $C >0$ is some constant that only depends on $h_0, s, T$ and $M$. To prove \eqref{ineq:discbound}, we first recall from Lemma \ref{lem:interbound} that
$$\| p_h u_h(t) \|_{H^\alpha} \leq C\|u_h(t)\|_{H^\alpha_h}.$$
Next, by Lemma \ref{lem:bounds}, we have the a-priori bound on $[0,T]$ given by\footnote{To be precise the time dependence on $T$ only appears when $s=1$.}
$$\sup_{t \in [0,T] } \| u_h(t) \|_{H^\alpha_h} \leq C (T, \| v_h \|_{H^\alpha_h}).$$
Finally, we have the general bound $\| v_h \|_{H^\alpha_h} \leq C \| v \|_{H^\alpha}$ by Lemma \ref{prop:unifH2} with $C >0$ independent of $h > 0$. Hence, we deduce that \eqref{ineq:discbound} holds.

\subsubsection*{\bf Bounds for $\partial_t p_h u_h(t)$ in $H^{-\alpha}(\R)$} We claim that
\begin{equation} \label{ineq:discbound2}
\sup_{t \in [0,T]} \| \partial_t p_h u_h(t) \|_{H^{-\alpha}} \leq C ,
\end{equation}
with some constant $C > 0$ that only depends on $h_0, s, T$ and $M = \sup_{0 < h \leq h_0} \| p_h v_h \|_{H^\alpha} < +\infty$. Indeed, from \eqref{eq:discivp} we obtain the estimate
$$
\| \partial_t u_h(t) \|_{H^{-\alpha}_h} \leq \| \Ldif^{J}_h u_h(t) \|_{H^{-\alpha}_h} + \| | u_h(t) |^2 u_h(t) \|_{H^{-\alpha}_h} ,
$$
where we refer to the definition of the dual norm $\| \cdot \|_{H^{-\alpha}_h}$ in Appendix \ref{dual} below. By Proposition \ref{prop:dualbound} and the previous bounds, we conclude
$$
\| \Ldif^{J}_h u_h(t) \|_{H^{-\alpha}_h} \leq C \sup_{t \in [0,T]} \| u_h(t) \|_{H^\alpha_h} \leq C(T, \| v_h \|_{H^\alpha_h} ) .
$$
Furthermore, we deduce that
$$
\| | u_h(t) |^2 u_h(t) \|_{H^{-\alpha}_h}  \leq \| | u_h(t) |^2 u_h(t) \|_{H^{\alpha}_h} \leq C(T,   \| v_h \|_{H^\alpha_h} ) ,
$$
where in the last step we used the Leibniz rule, the uniform embedding Lemma \ref{lem:sobo}, and again the uniform bound on $\| u_h(t) \|_{H^\alpha_h}$ on $[0,T]$. Hence, we have shown that 
$$
\sup_{t \in [0,T]} \| \partial_t u_h(t) \|_{H^{-\alpha}_h} \leq C
$$
for some constant $C > 0$ that only depends on $h_0, s, T$ and $M=\sup_{0 < h \leq h_0} \| p_h v_h \|_{H^\alpha}$. From Proposition \ref{prop:dual_ph} we have
$$
\| p_h f_h \|_{H^{-\alpha}} \leq C \| f_h \|_{H^{-\alpha}_h},
$$
with some constant $C > 0$ independent of $h> 0$, and the fact that $p_h$ commutes with $\partial_t$, we deduce that \eqref{ineq:discbound2} holds. 

\subsubsection*{\bf Weak-$*$ compactness} 
By the uniform bounds \eqref{ineq:discbound} and \eqref{ineq:discbound2}, we deduce by the Banach-Alaoglu theorem that
\begin{equation} \label{eq:weak1}
\mbox{$p_{h_n} u_{h_n} \weakto u$ weakly-$*$ in $L^\infty([0,T]; H^\alpha(\R))$ as $n \to \infty$},
\end{equation}
\begin{equation} \label{eq:weak2}
\mbox{$\partial_t p_{h_n} u_{h_n} \weakto \partial_t u$ weakly-$*$ in $L^\infty([0,T]; H^{-\alpha}(\R))$ as $n \to \infty$},
\end{equation}
with some sequence $h_n \to 0$ as $n \to \infty$. Note that, by standard arguments, the fact that $u \in L^\infty ([0,T]; H^\alpha(\R)) \cap W^{1,\infty}([0,T]; H^{-\alpha}(\R))$ implies that $u \in C^0([0,T]; L^2(\R))$. In particular, the notion of an initial condition for $u(0)$ is well-defined. Next, we recall that $p_h u_h(0) \weakto v$ weakly in $H^\alpha(\R)$ by Lemma \ref{lem:convph}, we deduce that $u \in C^0([0,T]; L^2(\R))$ satisfies 
$$
u(0) = v \in H^\alpha(\R).
$$ 
Next, we claim that the limit $u=u(t,x)$ solves the following initial-value problem:
\begin{equation} \label{eq:limitivp}
 \left \{ \begin{array}{l} u \in L^\infty([0,T]; H^{\alpha}(\R)) \cap W^{1,\infty}([0,T]; H^{-\alpha}(\R)) , \\
 i \partial_t u = c (-\Delta)^{\alpha} u \pm |u|^2 u, \quad \mbox{for a.\,e.~$t \in [0,T]$} ,\\
 u(0) = v \in H^\alpha(\R) . \end{array} \right . 
\end{equation}
Here $c > 0$ is some suitable constant chosen below. 

\begin{prop} \label{prop:wp2}
Let $T >0$ be given and suppose that $u= u(t,x)$ solves \eqref{eq:limitivp}. Then $u \in C^0([0,T]; H^\alpha(\R))$ holds and $u=u(t,x)$ is the unique solution given by Proposition \ref{NLScont} above.
\end{prop}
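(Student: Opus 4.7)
The strategy is to (i) promote $u$ to a continuous $H^\alpha$-valued function via Duhamel's formula, and then (ii) identify $u$ with the unique strong solution $\tilde u$ from Proposition \ref{NLScont} by a standard Gronwall argument in $L^2(\R)$.

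\emph{Step 1: Duhamel representation.} Since $u \in L^\infty([0,T]; H^\alpha) \cap W^{1,\infty}([0,T]; H^{-\alpha})$, an elementary interpolation argument yields $u \in C^0([0,T]; L^2(\R))$, so the initial condition $u(0)=v$ is well-defined. Setting $w(t) := e^{itc(-\Delta)^\alpha} u(t) \in H^{-\alpha}$ and using that $e^{itc(-\Delta)^\alpha}$ is unitary on every $H^\beta$ and commutes with $(-\Delta)^\alpha$, the equation $i \partial_t u = c(-\Delta)^\alpha u \pm |u|^2 u$ (valid for a.e.\ $t$ in $H^{-\alpha}$) translates to $\partial_t w = \mp i e^{itc(-\Delta)^\alpha}(|u|^2 u)$. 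Integration from $0$ to $t$ gives
\begin{equation}\label{eq:duh_plan}
u(t) = e^{-itc(-\Delta)^\alpha} v \mp i \int_0^t e^{-i(t-s)c(-\Delta)^\alpha} \bigl(|u(s)|^2 u(s)\bigr)\, ds.
\end{equation}

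\emph{Step 2: Continuity in $H^\alpha$.} Since $\alpha > \tfrac12$, the embedding $H^\alpha(\R) \hookrightarrow L^\infty(\R)$ together with the fact that $H^\alpha(\R)$ is a Banach algebra for $\alpha > \tfrac 1 2$ gives $|u|^2 u \in L^\infty([0,T]; H^\alpha(\R))$. The free evolution $e^{-i\tau c(-\Delta)^\alpha}$ is a strongly continuous unitary group on $H^\alpha$, so the first term in \eqref{eq:duh_plan} lies in $C^0([0,T]; H^\alpha)$, while the Duhamel integral defines a continuous $H^\alpha$-valued function of $t$ by dominated convergence. Thus $u \in C^0([0,T]; H^\alpha(\R))$.

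\emph{Step 3: Uniqueness via Gronwall.} Let $\tilde u \in C^0([0,T]; H^\alpha)$ be the unique global solution from Proposition \ref{NLScont} with $\tilde u(0) = v$, and set $w := u - \tilde u$. Pairing the difference equation $i\partial_t w = c(-\Delta)^\alpha w \pm (|u|^2 u - |\tilde u|^2 \tilde u)$ with $\bar w$ in the $H^{-\alpha}$--$H^\alpha$ duality and taking real parts, the self-adjoint term $c(-\Delta)^\alpha w$ contributes a purely imaginary pairing and drops out. Combined with the pointwise inequality $\bigl||u|^2 u - |\tilde u|^2 \tilde u\bigr| \leq C\bigl(|u|^2 + |\tilde u|^2\bigr)|w|$, this yields
\begin{equation*}
\tfrac{1}{2} \tfrac{d}{dt} \|w(t)\|_{L^2}^2 \leq C \bigl( \|u(t)\|_{L^\infty}^2 + \|\tilde u(t)\|_{L^\infty}^2 \bigr) \|w(t)\|_{L^2}^2 .
\end{equation*}
Since by Step 2 and Proposition \ref{NLScont} both $\|u\|_{L^\infty_x}$ and $\|\tilde u\|_{L^\infty_x}$ are bounded uniformly on $[0,T]$, and $w(0)=0$, Gronwall's inequality forces $w \equiv 0$, i.e.\ $u = \tilde u$.

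\emph{Main obstacle.} The only genuine subtlety is justifying the $L^2$-energy identity in Step 3, since a priori $u$ is only a distributional solution with coefficients in negative Sobolev spaces. This is handled by first using Step 2 to upgrade the regularity of $u$ (so the pairing becomes classical), or, alternatively, by a direct duality argument based on $\partial_t w \in L^\infty H^{-\alpha}$ and $w \in L^\infty H^\alpha$ via the identity $\tfrac{d}{dt}\|w\|_{L^2}^2 = 2\,\mathrm{Re}\,\langle \partial_t w, w\rangle_{H^{-\alpha},H^\alpha}$.
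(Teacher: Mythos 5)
Your proof is correct and follows essentially the same route as the paper: pass to the Duhamel integral equation, use $H^\alpha(\R)\hookrightarrow L^\infty(\R)$ for $\alpha>\frac12$ to control the cubic nonlinearity, conclude $u\in C^0([0,T];H^\alpha(\R))$, and then identify $u$ with the solution of Proposition \ref{NLScont}. The only divergence is the identification step, where the paper simply invokes the uniqueness already contained in the fixed-point construction of Proposition \ref{NLScont}, while you re-prove uniqueness by an $L^2$ Gronwall estimate; both are valid, and your version has the mild advantage of making explicit where the duality pairing $\langle \partial_t w, w\rangle_{H^{-\alpha},H^\alpha}$ needs justification.
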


\begin{proof}
From standard theory for abstract evolution equations, we deduce that $u=u(t,x)$ solving \eqref{eq:limitivp} satisfies the integral equation
$$
u(t) = e^{-it(-\Delta)^{\alpha} } v - i \int_0^t e^{-i(t-s) (-\Delta)^{\alpha}} |u(s)|^2 u(s) \, ds.
$$
Note that the map $u \mapsto |u|^2 u$ is locally Lipschitz on $H^\alpha(\R) \subset L^\infty(\R)$ when $s > \frac 1 2$. Hence we deduce that $u \in C^0([0,T]; H^\alpha(\R)$ and we can now apply Proposition \ref{NLScont}.
\end{proof}

To conclude the proof that the limit $u=u(t,x)$ in \eqref{eq:weak1} and \eqref{eq:weak2} is the unique solution of \eqref{eq:limitivp}, it remains to show that 
\begin{equation} \label{eq:ivp_ae_t}
i \partial_t u = c (-\Delta)^{\alpha} u \pm |u|^2 u, \quad \mbox{for a.\,e.~$t \in [0,T]$} ,
\end{equation}
where $c > 0$ is some constant. Note that, by \eqref{eq:weak2}, we directly have that
\begin{equation}
\int_0^T \langle \Phi, i \partial_t p_{h_n} u_{h_n}(t) \rangle \, dt \to \int_0^T \langle \Phi, i \partial_t u(t) \rangle \, dt \quad \mbox{as} \quad n \to \infty,
\end{equation}
for every $\Phi \in L^1([0,T]; H^{\alpha}(\R))$, where $\langle \cdot, \cdot \rangle$ denotes the usual inner product on $L^2(\R)$.

Next, we claim that 
\begin{equation} \label{eq:weakconvmaster}
\int_0^T \left \langle \Phi, p_{h_n} \Ldif^{J}_{h_n} u_{h_n}(t) \right \rangle \, dt \to \int_0^T \left \langle \Phi, c (-\Delta)^{\alpha} u(t) \right \rangle \, dt \quad \mbox{as} \quad n \to \infty,
\end{equation}
for every $\Phi \in L^1([0,T]; H^{\alpha}(\R))$. By a density argument, it suffices to prove this claim for $\Phi(t,x) = f(t) w(x)$ with $f \in C^\infty_0([0,T])$ and $w \in C^\infty_0(\R)$. For $h > 0$, we define the usual discretization $w_h \in L^2_h$ by setting
$$
w_h(x_m) = \frac{1}{h} \int_{x_m}^{x_{m+1}} w(x) \, dx.
$$
Since $\alpha \leq 1$ and  $w \in C^\infty_0(\R) \subset H^1(\R)$, we can apply Lemma \ref{lem:convph} to conclude 
$$
\| p_h w_h - w \|_{H^\alpha} \leq C \| p_h w_h - w \|_{H^1} \to 0 \quad \mbox{as} \quad  h \to 0^+.
$$ 
 Furthermore, recall the uniform bound $\| p_h \Ldif^{J}_{h} u_h(t) \|_{H^{-\alpha}} \leq C$. Hence
$$
\langle p_{h_n} w_{h_n} -w, p_{h_n} \Ldif^{J}_{h_n} u_{h_n}(t) \rangle \leq C \| p_{h_n} w_{h_n} - w \|_{H^\alpha} \to 0 \quad \mbox{as} \quad n \to \infty,
$$
for every $t \in [0,T]$. Thus it suffices to show that
\begin{equation} \label{eq:weakcv}
\int_0^T \left \langle f p_{h_n} w_{h_n}, p_{h_n} \Ldif^{J}_{h_n} u_{h_n}(t) \right \rangle \, dt \to \int_0^T \left \langle f w, c (-\Delta)^{\alpha} u(t) \right \rangle \, dt \quad \mbox{as} \quad n \to \infty,
\end{equation} 
for every $f = f(t) \in C^\infty_0([0,T])$ and $w = w(x) \in C^\infty_0(\R)$. Next, by Lemma \ref{lem:discintpart}, we see that
$$
\left \langle  p_{h_n} w_{h_n}, p_{h_n} \Ldif^{J}_{h_n} u_{h_n}(t) \right \rangle = \left \langle p_{h_n} \Ldif^{J}_{h_n} w_{h_n}, p_{h_n} u_{h_n}(t) \right \rangle \quad \mbox{for every $t \in [0,T]$}.
$$
From \eqref{eq:weak1} we can assume that $p_{h_n} u_{h_n}(t) \weakto u(t)$ weakly in $H^\alpha(\R)$ for a.\,e.~$t \in [0,T]$. Furthermore, we now claim that
\begin{equation} \label{eq:strconv}
\mbox{$p_h \Ldif^{J}_h w_h \to c (-\Delta)^{\alpha} w$ strongly in $L^2(\R)$ as $h \to 0^+$.}
\end{equation}
To show \eqref{eq:strconv}, we first recall from \eqref{eq:Lcommute} that 
$$(\Ldif^{J}_h w_h)(x_m) = (\Ldif^{J}_h w)_h(x_m),$$ 
where the action of $\Ldif^{J}_h$ on the function $w=w(x)$ is given by
$$
(\Ldif^{J}_h w)(x) = \frac{1}{\beta(h)} \sum_{n \neq 0} J_{|n|} ( w(x) - w(x-nh) ) .
$$
Applying Lemma \ref{lem:conv}, we obtain that
\begin{equation} \label{eq:strconv2}
\| \Ldif^{J}_h w - c (-\Delta)^{\alpha} w \|_2 \to 0 \quad \mbox{as} \quad h \to  0^+,
\end{equation}
where $c > 0$ is some constant. Using the bound $\| p_h f_h \|_2 \leq C \| f \|_2$ by Lemma \ref{lem:interbound}, we conclude
\begin{align*}
 \| p_h \Ldif^{J}_h w_h - c (-\Delta)^{\alpha} w  \|_2 & = \| p_h (\Ldif^{J}_h w)_h - c (-\Delta)^\alpha w \|_2  \\
 & \leq  \| p_h (  \Ldif^{J}_h w - c (-\Delta)^\alpha w)_h \|_2 +  \| p_h ( c (-\Delta)^\alpha w)_h - c (-\Delta)^\alpha w \|_2 \\
 & \leq C \|  \Ldif^{J}_h w - c (-\Delta)^\alpha w \|_2 + o(1),
\end{align*}
where $o(1) \to 0$ as $h \to 0^+$, thanks to the fact that $\| p_h f_h - f \|_2 \to 0$ as $h \to 0^+$ for any $f \in L^2(\R)$ by Lemma \ref{lem:convph}. (Note that $(-\Delta)^{\alpha} w \in L^2(\R)$ since $w \in C^\infty_0(\R)$.) Using now \eqref{eq:strconv2}, we deduce that \eqref{eq:strconv} holds. 

In view of \eqref{eq:strconv} and by the dominated convergence theorem for the integral with respect to $t$, we find that
$$
\int_0^T \left \langle f p_{h_n} w_{h_n}, p_{h_n} \Ldif^{J}_{h_n} u_{h_n}(t) \right \rangle \, dt \to \int_0^T \left \langle f c (-\Delta)^{\alpha} w,  u(t) \right \rangle \, dt \quad \mbox{as} \quad n \to \infty,
$$
for every $f = f(t) \in C^\infty_0([0,T])$ and $w = w(x) \in C^\infty_0(\R)$. By density this extends to $w \in H^\alpha(\R)$. Since $u(t) \in H^\alpha(\R)$ for a.\,e.~$t \in [0,T]$, we can integrate by parts to conclude that \eqref{eq:weakcv} holds, and hence the claim \eqref{eq:weakconvmaster} follows.

It remains to show weak-$*$ convergence for the nonlinear part. That is, we have to show 
\begin{equation} \label{eq:weakcv2}
\pm \int_0^T \left \langle \Phi, p_{h_n} ( |u_{h_n}(t)|^2 u_{h_n}(t)) \right \rangle \, dt \to \pm \int_0^T \left \langle \Phi,  |u(t)|^2 u(t) \right \rangle \, dt \quad \mbox{as} \quad n \to \infty,
\end{equation} 
for every $\Phi \in L^1([0,T]; H^\alpha(\R))$. Again, by a density argument, it suffices to show this claim for $\Phi(t,x) = f(t) w(x)$ with $f \in C^\infty_0([0,T])$ and $w \in C^\infty_0(\R)$. 

Next, we note that 
$$
\left \| p_h ( |u_h(t)|^2 u_h(t) ) \right \|_2 \leq C \| |u_h(t)|^2 u_h(t) \|_{L^2_h} \leq C \| u_h(t) \|_{L^\infty_h}^2 \| u_h(t) \|_{L^2_h} \leq C,
$$
using Lemma \ref{lem:interbound} and the fact that $\| u_h(t) \|_{L^\infty_h} \leq C$ by Lemma \ref{lem:sobo} and \ref{lem:bounds}. In particular, we can assume that $p_h( |u_h(t)|^2 u_h(t) )$ converges weakly in $L^2(\R)$ for a.\,e.~$t \in [0,T]$. However, from \cite{L}, we recall that $p_h f_h \weakto f$ weakly in $L^2(\R)$ if and only if $q_h f_h \weakto f$ weakly in $L^2(\R)$, where the piecewise constant interpolation $q_h f_h$ was defined in \eqref{def:qh}. Thus it remains to show that
\begin{equation} \label{cv:nonl}
\langle w, q_{h_n}(|u_{h_n}(t)|^2 u_{h_n}(t)) \rangle \to \langle w, |u(t)|^2 u(t) \rangle \quad \mbox{as} \quad n \to \infty,
\end{equation}
for every $w \in C^\infty_0(\R)$ and for a.\,e.~$t \in [0,T]$. 

Indeed, from \eqref{eq:weak1} and by local Rellich compactness, we can assume that $p_{h_n} u_{h_n}(t) \to u(t)$ strongly in $L^2_{\mathrm{loc}}(\R)$ for a.\,e.~$t \in [0,T]$. Next, from \cite{L}, we recall the general fact that $p_h f_h \to f$ strongly in $L^2_{\mathrm{loc}}(\R)$ if and only if $q_h f_h \to f$ strongly in $L^2_{\mathrm{loc}}(\R)$. Since we clearly have that $q_{h_n} ( |u_{h_n}(t)|^2 u_{h_n}(t) ) = |q_{h_n}(u_{h_n}(t))|^2 q_{h_n}(u_{h_n}(t))$ and using the uniform bound $\| u_h(t) \|_{L^\infty_h} \leq C$, we can use the dominated convergence theorem to deduce that \eqref{cv:nonl} holds. This completes the proof of claim \eqref{eq:weakcv2} above.

\medskip
We are now ready to complete the proof of Theorem \ref{thm:main}. From the previous discussion we know that the limit $u \in L^\infty([0,T]; H^\alpha(\R)) \cap W^{1,\infty}([0,T]; H^{-\alpha}(\R))$ given in \eqref{eq:weak1} and \eqref{eq:weak2} satisfies
$$
\int_0^T \langle \Psi, i \partial_t u \rangle \, dt = \int_0^T \langle \Psi, c (-\Delta)^{\alpha} u \rangle \, dt \pm \int_0^T \langle \Psi, |u|^2 u \rangle \, dt ,
$$
for every $\Psi \in L^1([0,T]; H^\alpha(\R))$. In particular, we deduce that \eqref{eq:ivp_ae_t} holds. This completes the proof that the limit $u=u(t,x)$ solves the initial-value problem \eqref{eq:limitivp}. By Proposition \ref{prop:wp2}, the solution $u=u(t,x)$ is unique and satisfies $u \in C^0([0,T]; H^\alpha(\R))$. In particular, the limit $u=u(t,x)$ is independent of the chosen subsequence $h_n \to 0$. The proof of Theorem 1 is now complete.  \hfill $\blacksquare$

\begin{appendix}

\section{Asymptotics for $\omega(k)$}\label{boundomega}

\begin{lemma} \label{lem:asymp}
Let $J = (J_n)_{n=1}^\infty \in \mathcal{K}_s$ for some $0 < s \leq +\infty$ and suppose $J \not \equiv 0$. Define the function
$$
\omega(k) := \sum_{n=1}^\infty J_n \big [ 1- \cos (nk) \big ].
$$
Then there exists some finite constant $C >0$ such that
$$
\lim_{k \to 0} \frac{\omega(k)}{\delta(k)} = C,
$$
where
$$
\delta(k) = \left \{ \begin{array}{ll} |k|^{2s}, & \quad \mbox{if $0 < s < 1$}, \\
(- \log|k|)|k|^2, & \quad \mbox{if $s=1$}, \\
|k|^2, & \quad \mbox{if $1 < s \leq +\infty$}. \end{array} \right . 
$$
\end{lemma}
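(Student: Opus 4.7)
The plan is to treat the three ranges of $s$ separately, in each case reducing $\omega(k)$ for small $k$ to a quantity whose leading behavior is determined by the asymptotic $J_n \sim A n^{-(1+2s)}$ (or the decay faster than any polynomial if $s=+\infty$).

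\textbf{Case $1 < s \leq +\infty$.} Here $\sum_{n=1}^\infty n^2 J_n < +\infty$, since $n^2 J_n$ is summable for $s > 1$ and (by definition of $\mathcal{K}_\infty$) decays faster than any negative power when $s=+\infty$. Using the elementary bound $0 \leq 1-\cos(nk) \leq \min\{n^2 k^2/2, 2\}$ and $\lim_{k\to 0}(1-\cos(nk))/k^2 = n^2/2$, the dominated convergence theorem (applied to counting measure) gives
\[
\lim_{k\to 0} \frac{\omega(k)}{k^2} = \frac{1}{2}\sum_{n=1}^\infty n^2 J_n =: C,
\]
which is strictly positive because $J_1>0$ (and all $J_n\geq 0$).

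\textbf{Case $0 < s < 1$.} I rewrite the sum as a Riemann sum. Setting $a_n := n^{1+2s} J_n$ (so $a_n \to A$ by hypothesis) and $g(y) := (1-\cos y)/y^{1+2s}$, one checks that
\[
\omega(k) \;=\; k^{2s} \sum_{n=1}^\infty k\, a_n\, g(nk).
\]
The function $g$ is continuous on $(0,+\infty)$ with $g(y)\sim y^{1-2s}/2$ as $y\to 0$ and $g(y) = O(y^{-1-2s})$ as $y\to\infty$, hence integrable on $(0,\infty)$ precisely because $0<s<1$. The sequence $\{a_n\}$ is bounded, and $g$ is decreasing near infinity and behaves like a power near zero, which yields a uniform integrable majorant. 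A standard Riemann-sum argument (split at some large but fixed $N_0$, use $a_n = A + o(1)$ for $n\geq N_0$, and handle the finite head by $1-\cos(nk)\sim n^2k^2/2 = o(k^{2s})$) then gives
\[
\lim_{k\to 0^+} \frac{\omega(k)}{k^{2s}} \;=\; A \int_0^\infty \frac{1-\cos y}{y^{1+2s}}\,dy \;=:\; C > 0.
\]

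\textbf{Case $s=1$.} Now $g(y)=(1-\cos y)/y^3$ is no longer integrable at infinity; this is precisely what produces the logarithm. The plan is to split the sum at the natural scale $N(k):=\lfloor 1/k\rfloor$. For $n\leq N(k)$, I use $1-\cos(nk) = n^2k^2/2 + O(n^4 k^4)$ and $J_n = A n^{-3}(1+o(1))$ to obtain
\[
\sum_{n=1}^{N(k)} J_n[1-\cos(nk)] \;=\; \frac{A k^2}{2}\sum_{n=1}^{N(k)} \frac{1}{n}(1+o(1)) + O(k^2) \;=\; \frac{A}{2} k^2 |\log k|\,(1+o(1)).
\]
For $n>N(k)$, the crude bound $1-\cos(nk)\leq 2$ together with $J_n \lesssim n^{-3}$ gives
\[
\sum_{n>N(k)} J_n[1-\cos(nk)] \;\lesssim\; \sum_{n>N(k)} n^{-3} \;\lesssim\; N(k)^{-2} \;\lesssim\; k^2,
\]
which is lower order than $k^2|\log k|$. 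Combining the two contributions yields $\omega(k)/\delta(k)\to A/2 =: C > 0$.

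\textbf{Main obstacle.} The most delicate step is the uniform integrable majorant in the Riemann-sum argument for $0<s<1$: the bound must control $g$ both near $0$ (where $g$ blows up like $y^{1-2s}$ if $s>1/2$ is subtle at the level of the sum) and near infinity, uniformly in $k$. The head-tail splitting at $n\sim 1/k$ is the standard device to avoid this issue, and the logarithmic case $s=1$ is a clean borderline version of the same splitting. Positivity of the limit constant $C$ in all three cases uses $J_1>0$ (and in the Riemann-sum cases, the positivity of $g$).
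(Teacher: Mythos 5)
Your argument is correct, and for the two non-borderline ranges it is essentially the paper's: for $0<s<1$ both proofs identify $\omega(k)/k^{2s}$ as a Riemann sum converging to $A\int_0^\infty (1-\cos y)\,y^{-1-2s}\,dy$ (the paper invokes Dickinson's result on approximative Riemann sums for improper integrals at exactly the ``uniform majorant'' step you flag as delicate), and for $1<s\leq+\infty$ both reduce to $\tfrac12\sum n^2 J_n$, the paper via $\omega''(0)$ and l'H\^opital, you via dominated convergence. Where you genuinely diverge is in organization and in the case $s=1$. The paper first computes the pure-power cases $J_n=n^{-1-2s}$ and then transfers to general $J\in\mathcal{K}_s$ by an $\eps$-comparison, sandwiching $J_n$ between $(A\pm\eps)n^{-1-2s}$ for $n\geq N(\eps)$ and absorbing the finite head into an $\mathcal{O}(k^2)$ error; you instead carry the coefficients $a_n=n^{1+2s}J_n\to A$ through the Riemann-sum limit directly, which folds the two steps into one. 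For $s=1$ the paper's route is an explicit closed-form computation for $J_n=n^{-3}$ using the Fourier series $\sum n^{-1}\cos(nk)=-\log(2\sin(k/2))$ integrated twice, followed by the same $\eps$-comparison; your head--tail splitting at $n\sim 1/k$ with the Taylor expansion of $1-\cos$ is more elementary, applies to general $J\in\mathcal{K}_1$ in one pass, and makes the origin of the logarithm (the harmonic sum $\sum_{n\leq 1/k}n^{-1}$) transparent, at the cost of having to verify that $\sum_{n\leq N}\eps_n/n=o(\log N)$ when $\eps_n\to 0$ (a routine Ces\`aro-type fact you implicitly use). Both give the same constants; one small remark is that positivity of $C$ in the case $s>1$ needs only $J\not\equiv 0$, which is what the lemma assumes, rather than $J_1>0$.
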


\begin{remark} \em
In the case when $J_n = n^{-1-2s}$, this result could be inferred from known expansions of $\omega(k)$ in terms of the {\em polylogarithm}. Below, we give a proof that rests on more elementary arguments. 
\end{remark}

\begin{proof}
By symmetry, it suffices to study the limit as $k \to 0^+$. We divide the proof into the following steps. First, we treat the special cases, where 
$$J_n= n^{-1-2s},$$
treating the subcases $0 < s < 1$, $s=1$, and $s \geq 1$ separately. Finally, we turn to the general case $J \in \mathcal{K}_s$.

\medskip
{\bf Case $J_n = n^{-1-2s}$ with $0 < s < 1$.} Let $k > 0$ in what follows. For $J_n = n^{-1-2s}$ with $0 < s < 1$, we write $\omega(k)$ as
$$
\omega(k) =  k^{2s} \sum_{n=1}^\infty \frac{k}{ (k n )^{1+2s}} \big [ 1- \cos (n k) \big ] .
$$  
Passing to the limit $k \to 0^+$, we notice that
$$
\lim_{k \to 0^+} \sum_{n=1}^\infty \frac{k}{(k n )^{1+2s}} \big [ 1- \cos (n k) \big ]  = \int_0^\infty \frac{1- \cos x }{x^{1+2s}}  \, dx,
$$
which can be easily deduced from \cite{D}. Integrating by parts and using an integral table, we find 
\begin{equation} \label{eq:Casymp}
\int_0^\infty \frac{1- \cos x }{x^{1+2s}}  \, dx = \frac{1}{2s} \int_0^\infty \frac{\sin x}{x^{2s}} \, dx = \frac{\pi}{4s \Gamma(2s) \sin ( s \pi)} =: C_s ,
\end{equation} 
where we clearly $C_s > 0$ holds. Hence, we conclude that $\lim_{k \to 0^+} k^{-2s} \omega(k) =2 C_s >0$ holds in this case. 

\medskip
{\bf Case $J_n = n^{-2}$.} First, we recall that $\sum_{n=1}^\infty n^{-1} \cos(nk) = - \log( 2 \sin(k/2))$ for $0 < k \leq \pi$ holds. Integrating this identity twice, we obtain
$$
\sum_{n=1}^{\infty} \frac{1}{n^3} \big [ 1- \cos (n k) \big ]  = -\int_0^k \int_0^z \log (2\sin (t/2)) \, dt \, dz, \quad \mbox{for $0 < k \leq  \pi$}.
$$ 
Clearly, we have $\log (2 \sin (t/2)) = \log 2 + \log ( \sin (t/2))$ and moreover $\int_0^k \int_0^z \log 2 \, dt \, dz = \frac{\log 2}{2} k^2 = \mathcal{O}(k^2)$. Hence it remains to consider the integral involving $\log (\sin t/2)$ only. Now we substitute $u =  \sin (t/2)$ and integrate by parts, which yields that
\begin{align*}
\int_0^z \log(\sin (t/2)) \, dt & = \int_0^{\sin ( z/2)}  \log ( u) \frac{2 du}{\sqrt{1-u^2}} \\
% & =   2 \log ( u) \arcsin ( u ) \Big |_{u=0}^{\sin (z/2)} - 2 \int_0^{\sin (z/2)} \frac{1}{u} \arcsin (u) \, du \\
%& = \log (\sin (z/2)) z - 2 \int_0^{\sin (z/2)} \left ( 1 + \mathcal{O}(u^2) \right ) \, du \\
& = \log (\sin (z/2)) z - 2 \int_0^{\sin (z/2)} \frac{1}{u} \arcsin (u) \, du .
\end{align*}
Using the series expansion $\arcsin u = u + \mathcal{O}(u^3)$, we find that
\begin{align*}
\int_0^k \int_0^z \log(\sin (t/2)) \, dt \, dz = \int_0^k \log (\sin (z/2)) z \, dz + \mathcal{O}(z^2) .
\end{align*}
Next, we integrate by parts again in the integral over $z$ to conclude that
\begin{align*}
\int_0^k \log( \sin (z/2)) z \, dz & = \frac{1}{2} \log ( \sin (k/2)) k^2 - \frac{1}{4} \int_0^k \cot (z/2) z^2 \, dk  \\
& =  \frac{1}{2} \log ( \sin (k/2)) k^2 + \mathcal{O}(k^2),
\end{align*}
where we used that $\cot (z/2) = 2/z + \mathcal{O}(z)$. Since $\lim_{k \to 0^+} \frac{ \log( \sin (k/2))}{\log(k)} = 1$, we conclude that
$$
\lim_{k \to 0^+} \frac{\omega(k)}{\log( k ) k^2} = -\frac{1}{2} .
$$
This completes the proof of Lemma \ref{lem:asymp} for $J_n = n^{-2}$.

\medskip
{\bf Case $J_n = n^{-1-2s}$ with $s > 1$.} Since $\sum_{n=1}^\infty n^{1-2s}$ is finite in this case, we deduce that $\omega'(k)$ and $\omega''(k)$ both exist and are given by
$$
\omega'(k) = \sum_{n=1}^\infty \frac{\sin (nk)}{n^{2s}}, \quad \omega''(k) = \sum_{n=1}^\infty \frac{ \cos (nk) }{n^{2s-1}} .
$$
Note that $\omega(0) = 0$ and $\omega'(0) = 0$. By l'Hospital's rule, we find that
$$
\lim_{k \to 0^+} \frac{\omega(k)}{k^2} = \frac{\omega''(0)}{2} = \frac{1}{2} \sum_{n=1}^{\infty} \frac{1}{n^{2s-1}} = \frac{1}{2} \zeta(2s-1) ,
$$
which is finite, since $2s -1 > 1$ by assumption. This proves Lemma \ref{lem:asymp} for $J_n = n^{-1-2s}$ when $s > 1$. 

\medskip
{\bf Case $J=(J_n)_{n=1}^\infty \in \mathcal{K}_s$.} First, we consider the case  such that $0 < s \leq 1$ holds. Let $A = \lim_{n \to \infty} n^{-1-2s} J_n$. Note that $0 < A < +\infty$ since $J \in \mathcal{K}_s$. Let $\eps > 0$ be given. We claim that we can find $k_0 > 0$ such that
\begin{equation} \label{ineq:Jasymp}
-\eps + (A-\eps) C_s  \leq \frac{\omega(k)}{\delta(k)} \leq \eps + (A+\eps) C_s  , \quad \mbox{for $0 < k < k_0$},
\end{equation}
where $C_s > 0$ is the constant in \eqref{eq:Casymp} and $\delta(k)$ denotes the function introduced in Lemma \ref{lem:asymp} above. Since $\eps > 0$ can be made arbitrarily small, this estimate would show that $\lim_{k \to 0+} \delta(k)^{-1} \omega(k) = C_sA$, as desired. 

To prove \eqref{ineq:Jasymp}, we note that, since $J_n \in \mathcal{K}_s$ by assumption, there exists an integer $N = N(\eps) \geq 1$ such that
$$
\frac{A-\eps}{n^{1+2s}} \leq J_n \leq \frac{A+\eps}{n^{1+2s}}, \quad \mbox{for $n \geq N$}.
$$
Splitting $\omega(k) = \sum_{n < N} \ldots + \sum_{n \geq N} \ldots$ and using that $J_n \geq 0$ and $1-\cos (nk) \geq 0$, we deduce that
\begin{align*}
\omega(k) & \leq \sum_{n < N} J_n \big [ 1 - \cos(nk) \big ] + (A+\eps) \sum_{n \geq N} \frac{1}{n^{1+2s}} \big [ 1- \cos (n k) \big ] \\
& = \sum_{n < N} \left ( J_n - \frac{A+\eps}{n^{1+2s}} \right )  \big [ 1 - \cos(nk) \big ] + (A+\eps) \sum_{n=1}^\infty  \frac{1}{n^{1+2s}} \big [ 1- \cos (n k) \big ] \\
& =: I(N, \eps,k) + II(\eps, k) . 
\end{align*}
Since $I(N,\eps,k)$ is a sum of finitely many terms, we can expand $\cos (nk)$ to conclude that $I(N,\eps, k) = \mathcal{O}(k^2)$ as $k \to 0^+$. Since moreover $0 < s \leq 1$, we can find $k_0 > 0$ such that
$$
\frac{I(N,\eps,k)}{\delta(k)} \leq \eps , \quad \mbox{for $0 < k < k_0$}.
$$
Moreover, from the previous discussion, we deduce that $\frac{II(\eps,k)}{\delta(k)} \to (A+\eps) C_s$ as $k \to 0^+$, where $C_s > 0$ is given by \eqref{eq:Casymp}. Hence, by choosing $k_0>0$ sufficiently small, we deduce
$$
\frac{\omega(k)}{\delta(k)} \leq \eps + (A+\eps) C_s, \quad \mbox{for $0 < k < k_0$},
$$
which is the claimed upper bound in \eqref{ineq:Jasymp}. The proof of the lower bound follows from analogous arguments using that $J_n \geq \frac{A-\eps}{n^{1+2s}}$ for $n \geq N(\eps)$. 

Thus we have shown that \eqref{ineq:Jasymp} holds for arbitrary $\eps > 0$, and this completes the proof of Lemma \ref{lem:asymp} for $J \in \mathcal{K}_s$ with $0 < s \leq 1$.

Finally, it remains to treat the case $J \in \mathcal{K}_s$ with $1 < s \leq +\infty$. Since $\sum_{n=1}^\infty n^2 J_n < +\infty$ in this case, we can deduce in a similar fashion as for $J_n = n^{-1-2s}$ with $s > 1$ that
$$
\lim_{k \to 0} \frac{\omega(k)}{|k|^2}= \frac{\omega''(0)}{2} = \frac{1}{2} \sum_{n=1}^\infty n^2 J_n < +\infty .
$$ 
The proof of Lemma \ref{lem:asymp} is now complete. \end{proof}

\section{Dual Bounds and Integration by Parts}\label{dual}

Recall the definition of $\| \cdot \|_{H^\sigma_h}$ in \eqref{def:Hh_sigma} with $0 \leq \sigma \leq 1$. We define the corresponding dual norm by setting
$$
\| u_h \|_{H^{-\sigma}_h} := \sup_{ \| v_h \|_{H^\sigma_h} \leq 1} \left | (v_h, u_h)_{L^2_h} \right | .
$$
We have the following fact.

\begin{prop} \label{prop:dual_ph}
For any $0 \leq \sigma \leq 1$, we have
$$
\| p_h f_h \|_{H^{-\sigma}} \leq C \| f_h \|_{H^{-\sigma}_h}
$$
with some constant $C > 0$ independent of $h> 0$ and $f_h$.
\end{prop}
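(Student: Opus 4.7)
The plan is to prove the bound by Hilbert space duality, identifying the adjoint of $p_h$ as a discrete sampling of a convolution, and then interpolating between the two endpoint Sobolev bounds for that adjoint. Unraveling the definition of the dual norms, the claim is equivalent to
\[
\Bigl|\int_\R g(x)\, p_h f_h(x)\, dx\Bigr| \leq C \|g\|_{H^\sigma(\R)} \|f_h\|_{H^{-\sigma}_h}
\]
for every $g \in H^\sigma(\R)$ and $f_h \in L^2_h$. I will first produce a lattice function $v_h = v_h[g]$ satisfying $\langle g, p_h f_h\rangle_{L^2(\R)} = (v_h, f_h)_{L^2_h}$ for all $f_h \in L^2_h$, and then establish the uniform adjoint estimate $\|v_h\|_{H^\sigma_h} \leq C \|g\|_{H^\sigma}$; combined with the definition of $\|f_h\|_{H^{-\sigma}_h}$, this finishes the proof.

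To find $v_h$, expand $p_h f_h$ on each interval $[x_m, x_{m+1})$ according to \eqref{def:ph} and integrate against $g$, getting two sums: one involving $f_h(x_m)$ with weights $a_m := \int_{x_m}^{x_{m+1}} g\, dx$, and one involving $D^+_h f_h(x_m)$ with weights $b_m := \int_{x_m}^{x_{m+1}} g(x)(x-x_m)\, dx$. A single summation by parts turns the latter into a sum against $f_h(x_m)$, and a short change of variables identifies the resulting coefficient as
\[
v_h(x_m) = \int_\R g(x_m + y)\, \psi_h(y)\, dy = (g * \psi_h)(x_m),\qquad \psi_h(y) := \tfrac{1}{h}\bigl(1 - |y|/h\bigr)_+,
\]
i.e. the sampling at $x_m = mh$ of $g$ convolved with the symmetric tent (hat) function of $L^1$-mass $1$ supported on $[-h, h]$. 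This is exactly the Hilbert space adjoint of $p_h : L^2_h \to L^2(\R)$.

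Now I verify $\|v_h\|_{H^\sigma_h} \leq C\|g\|_{H^\sigma}$ at the two endpoints and interpolate via $H^\sigma(\R) = [L^2(\R),H^1(\R)]_{\sigma,2}$ and $H^\sigma_h = [L^2_h, H^1_h]_{\sigma,2}$ (the same identifications used in Lemma \ref{prop:unifH2}). At $\sigma = 0$, the identity above together with Lemma \ref{lem:interbound} gives $|(v_h, f_h)_{L^2_h}| = |\langle g, p_h f_h\rangle| \leq C\|g\|_{L^2}\|f_h\|_{L^2_h}$; testing against $f_h = v_h$ yields $\|v_h\|_{L^2_h} \leq C\|g\|_{L^2}$. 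For $\sigma = 1$, the key computation is that since $g\in H^1(\R)$ gives $(g*\psi_h)' = g'*\psi_h \in L^2$, one has
\[
D^+_h v_h(x_m) = \frac{1}{h}\int_{x_m}^{x_{m+1}} (g*\psi_h)'(t)\, dt = \bigl((g*\psi_h)'\bigr)_h(x_m),
\]
the averaged discretization of $(g*\psi_h)'$. Applying Lemma \ref{prop:unifH2} and Young's convolution inequality with $\|\psi_h\|_{L^1} = 1$ gives $\|D^+_h v_h\|_{L^2_h} \leq \|g' * \psi_h\|_{L^2} \leq \|g'\|_{L^2}$, and Lemma \ref{lem:hfull} then upgrades this to $\|v_h\|_{H^1_h} \leq C\|g\|_{H^1}$.

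The one nontrivial step is the initial computation: recognizing that after summation by parts the adjoint has the convolution-sampling structure $v_h(x_m) = (g*\psi_h)(x_m)$ with a mass-one kernel. Once that is visible, the $\sigma = 1$ bound is immediate from Young's inequality, the $\sigma = 0$ bound follows from duality against Lemma \ref{lem:interbound}, and the intermediate values of $\sigma$ are free from standard interpolation of $H^\sigma$ on $\R$ and on $h\Z$.
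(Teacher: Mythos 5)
Your proof is correct and follows the same strategy as the paper's (verify the endpoint cases $\sigma=0$ and $\sigma=1$, then interpolate between $[L^2,H^1]_{\sigma,2}$ and $[L^2_h,H^1_h]_{\sigma,2}$ as in Lemma \ref{prop:unifH2}). The only difference is that you supply the computation the paper leaves as ``easily verified'': the explicit identification of the adjoint $p_h^* g = (g * \psi_h)|_{h\Z}$ with the mass-one tent kernel, which is a clean and correct way to carry out both endpoint bounds.
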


\begin{proof}
This claim is easily verified for $\sigma=0$ (see Lemma \ref{prop:unifH2}) and $\sigma=1$ (by calculation using also the equivalence of the norms $\| \cdot \|_{\tilde{H}^1_h}$ and $\| \cdot \|_{H^1_h}$ by Lemma \ref{lem:hfull}). By interpolation of norms (as in the proof of Lemma \ref{prop:unifH2}) we deduce the bound for all $0 \leq \sigma \leq 1$. 
\end{proof}

Next, we have the following estimate.

\begin{prop} \label{prop:dualbound}
Suppose that $J= ( J_n )_{n=1}^\infty$ satisfies (A1) and (A2) with some $0 < s \leq +\infty$. Let $\alpha = s$ for $0 < s < 1$ and $\alpha = 1$ for $s \geq 1$. Then there exists a constant $C > 0$ independent of $h >0$ such that
$$
\| \Ldif^{J}_h u_h \|_{H^{-\alpha}_h} \leq C \| u_h \|_{H^\alpha_h} 
$$
for all $u_h \in L^2_h$.
\end{prop}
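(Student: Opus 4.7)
The plan is to argue by duality together with a symmetric square-root factorization of the (nonnegative) Fourier multiplier of $\Ldif^{J}_h$. By definition of the dual norm, it suffices to establish the bilinear bound
$$
\bigl| (v_h, \Ldif^{J}_h u_h)_{L^2_h} \bigr| \leq C \|v_h\|_{H^\alpha_h} \|u_h\|_{H^\alpha_h}
\qquad \text{for all } u_h, v_h \in L^2_h,
$$
with $C$ independent of $h \in (0, h_0]$, and then take the supremum over $\|v_h\|_{H^\alpha_h} \leq 1$.

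First, I would apply Plancherel's identity together with \eqref{eq:Jgood} to rewrite
$$
(v_h, \Ldif^{J}_h u_h)_{L^2_h} = h \int_{-\pi}^{+\pi} \overline{\hat v_h(k)}\, m_h(k)\, \hat u_h(k)\, dk,
\qquad m_h(k) := \frac{\omega(k)}{\beta(h)},
$$
where $\omega(k) = 2\sum_{n=1}^\infty J_n[1-\cos(nk)] \geq 0$. Since $m_h(k) \geq 0$, I factor $m_h = \sqrt{m_h}\cdot\sqrt{m_h}$ and apply the Cauchy--Schwarz inequality in $L^2(h\,dk;[-\pi,\pi])$ to obtain
$$
\bigl|(v_h, \Ldif^{J}_h u_h)_{L^2_h}\bigr| \leq \left( h \int_{-\pi}^{+\pi} m_h(k) |\hat v_h(k)|^2\, dk \right)^{1/2} \left( h \int_{-\pi}^{+\pi} m_h(k) |\hat u_h(k)|^2\, dk \right)^{1/2}.
$$

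Each factor on the right is controlled by the uniform multiplier bound
\begin{equation} \label{eq:dualmult}
m_h(k) \leq C\bigl(1 + h^{-2\alpha} |k|^{2\alpha}\bigr), \qquad |k| \leq \pi,\ 0 < h \leq h_0,
\end{equation}
which is exactly the upper half of inequality \eqref{ineq:embed} established in Lemma \ref{lem:embed} for $s \neq 1$, and, in the borderline case $s=1$ (where $\alpha = 1$), the upper half of \eqref{ineq:bordercase} established in Lemma \ref{lem:embedlog}. With \eqref{eq:dualmult} in hand, each factor is bounded by $C\|v_h\|_{H^\alpha_h}^2$ (respectively $C\|u_h\|_{H^\alpha_h}^2$) by the very definition \eqref{def:Hh_sigma} of the norm $\|\cdot\|_{H^\alpha_h}$. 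Combining gives the bilinear bound, which completes the proof.

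The argument is essentially a soft duality/Plancherel computation; the only nontrivial input is the symbol estimate \eqref{eq:dualmult}, which is already available from the earlier norm-equivalence lemmas. The mildest subtlety is to make sure that in the logarithmic borderline case $s=1$ one invokes the upper bound from Lemma \ref{lem:embedlog} (which yields $m_h(k) \leq C(1+ h^{-2}|k|^2)$, matching $\alpha = 1$) rather than the lower bound, which is only stated for $\sigma < 1$.
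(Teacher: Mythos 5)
Your argument is correct and is essentially the paper's own proof: both reduce the claim via Plancherel to the multiplier upper bound $1+\omega(k)/\beta(h) \leq C(1+h^{-2\alpha}|k|^{2\alpha})$ from Lemmas \ref{lem:embed} and \ref{lem:embedlog}, followed by Cauchy--Schwarz (the paper just applies the symbol bound to the integrand before splitting the weight, whereas you split $\sqrt{m_h}\cdot\sqrt{m_h}$ first --- an immaterial reordering). Your remark about invoking the \emph{upper} half of \eqref{ineq:bordercase} in the case $s=1$ is exactly the right point of care.
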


\begin{proof}
This follows from estimates derived in the proofs of Lemma \ref{lem:embed} and \ref{lem:embedlog}. Indeed, with the notation used there, we find that
\begin{align*}
\left | (v_h, \Ldif^{J}_h u_h)_{L^2_h} \right | & \leq h \int_{-\pi}^{+\pi} \left (1+ \frac{ \omega(k) }{\beta(h)} \right )  \left | \hat{v}_h(k) \right | \left | \hat{u}_h(k) \right | \, dk \\
& \leq C h \int_{-\pi}^{\pi} \left ( 1 + h^{-2\alpha} |k|^{2 \alpha} \right ) \left | \hat{v}_h(k) \right | \left | \hat{u}_h(k) \right | \, dk \leq C \| v_h \|_{H^\alpha_h} \| u_h \|_{H^{\alpha}_h} ,
\end{align*}
with some constant $C > 0$ independent of $h> 0$. Here we used the estimates for $(1+ \frac{\omega(k)}{\beta(h)})$ derived in the proofs of Lemma \ref{lem:embed} and \ref{lem:embedlog}. \end{proof}

We have the following technical result.
\begin{lemma} \label{lem:discintpart}
Let $\Ldif^{J}_h$ be as above. For any $w_h, u_h \in L^2_h$, we have the identity
$$
\left \langle p_h w_h, p_h \Ldif^{J}_h u_h \right \rangle = \left \langle p_h \Ldif^{J}_h w_h, p_h u_h \right \rangle,
$$
where $\langle f,g \rangle = \int_{\R} \overline{f(x)} g(x) \,dx$ is the usual inner product on $L^2(\R)$.
\end{lemma}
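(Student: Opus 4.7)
The plan is to encode the $L^2(\R)$-pairing $\langle p_h\cdot,p_h\cdot\rangle$ as a discrete pairing against a symmetric, translation-invariant ``mass operator'' $M_h$ on $L^2_h$, and then reduce the identity to the self-adjointness of $\Ldif^J_h$ on $L^2_h$ (noted after \eqref{eq:Jgood}) together with the elementary fact that any two translation-invariant convolution operators on $h\Z$ commute.

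Concretely, I would write $p_h f_h(x) = \sum_m f_h(x_m)\phi_m(x)$ with hat functions $\phi_m(x) := (1 - |x-x_m|/h)_+$. A direct computation gives
\begin{equation*}
\int_\R \phi_m(x)\phi_n(x)\,dx = h\,K(m-n),
\end{equation*}
where $K(0) = \tfrac{2}{3}$, $K(\pm 1) = \tfrac{1}{6}$, and $K(j) = 0$ for $|j| \geq 2$. Since $K$ is real and even, the bounded operator
\begin{equation*}
(M_h f_h)(x_m) := \tfrac{2}{3} f_h(x_m) + \tfrac{1}{6}\bigl(f_h(x_{m-1}) + f_h(x_{m+1})\bigr)
\end{equation*}
is self-adjoint and translation-invariant on $L^2_h$. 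Inserting the basis expansion of $p_h f_h$ and $p_h g_h$ into the $L^2(\R)$-inner product and using the formula for $\int\phi_m\phi_n$ yields the key mass-matrix identity
\begin{equation*}
\langle p_h f_h, p_h g_h\rangle = (M_h g_h, f_h)_{L^2_h} \qquad \mbox{for all } f_h, g_h \in L^2_h.
\end{equation*}

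Now, from \eqref{eq:Jgood} the operator $\Ldif^J_h$ is itself a discrete convolution on $h\Z$, hence translation-invariant and bounded on $L^2_h$. Two discrete convolutions always commute, so $M_h \Ldif^J_h = \Ldif^J_h M_h$. Combining this with the self-adjointness of $\Ldif^J_h$ and applying the mass identity to the substitutions $(f_h,g_h) = (w_h, \Ldif^J_h u_h)$ and $(\Ldif^J_h w_h, u_h)$ respectively, I obtain
\begin{align*}
\langle p_h w_h, p_h \Ldif^J_h u_h\rangle &= (M_h \Ldif^J_h u_h, w_h)_{L^2_h} = (\Ldif^J_h M_h u_h, w_h)_{L^2_h} \\
&= (M_h u_h, \Ldif^J_h w_h)_{L^2_h} = \langle p_h \Ldif^J_h w_h, p_h u_h\rangle,
\end{align*}
which is the claimed identity.

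There is no genuine analytical obstacle: $\Ldif^J_h$ is bounded on $L^2_h$ (as recorded in Section 3) and $M_h$ has only three nonzero diagonals, so every sum converges absolutely. The only computational content is the evaluation of $\int\phi_m\phi_n$, and the whole identity ultimately rests on the observation that piecewise linear interpolation $p_h$ is translation-equivariant with respect to lattice shifts, so the bilinear form $\langle p_h\cdot,p_h\cdot\rangle$ corresponds to a translation-invariant operator on $L^2_h$, which automatically commutes with $\Ldif^J_h$.
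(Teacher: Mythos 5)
Your proof is correct and follows essentially the same route as the paper: both reduce the $L^2(\R)$-pairing of the linear interpolants to an explicit discrete, translation-invariant, symmetric bilinear form on $L^2_h$ and then conclude from the self-adjointness of $\Ldif^J_h$ together with its commutation with lattice shifts. The only difference is bookkeeping --- you package the Gram computation as the tridiagonal mass operator $M_h$ and invoke $M_h\Ldif^J_h=\Ldif^J_h M_h$, whereas the paper expands $p_h$ into four sums and uses $D^+_h\Ldif^J_h=\Ldif^J_h D^+_h$ term by term; your version is, if anything, a slightly cleaner organization of the same argument.
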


\begin{proof}
First, we recall that (with $x=mh$ and $m \in \Z$)
$$
(p_h w_h)(x) = \sum_{m} w_h(x_m) \1_{[x_m, x_{m+1})}(x) + \sum_m (D^+_h w_h)(x_m) \1_{[x_m, x_{m+1})}(x) (x-x_m),
$$
where $\1_A(x)$ denotes the characteristic function of the set $A \subset \R$. Using this, we conclude that
\begin{align*}
\left \langle p_h w_h, p_h \Ldif^{J}_h u_h \right \rangle & =  h \sum_m \overline{w_h(x_m)} ( \Ldif^{J}_h u_h)(x_m) \\
& \quad + \frac{1}{2} h^2 \sum_m   \overline{(D^+_h w_h)(x_m)} ( \Ldif^{J}_h u_h)(x_m) \\
& \quad + \frac{1}{2} h^2 \sum_m   \overline{w_h(x_m)} (D^+_h \Ldif^{J}_h u_h)(x_m)\\
& \quad + \frac{1}{3} h^3 \sum_m  \overline{(D^+_h w_h)(x_m)} ( D^+_h \Ldif^{J}_h u_h)(x_m) . \\
\end{align*}
Since $(\Ldif^{J}_h)^{*} = \Ldif^{J}_h$ is self-adjoint on $L^2_h$ and using the commutation relation $D^+_h \Ldif^{J}_h = \Ldif^{J}_h D^+_h$, we easily  derive the claimed identity.
\end{proof}

\end{appendix}

\bibliographystyle{amsalpha} 
%\thebibliography{hh}

\thispagestyle{plain}

\end{document}